\pdfoutput=1
\documentclass[11pt]{amsart}
\usepackage{amsmath,amssymb,amsthm,mathtools}
\usepackage{booktabs}
\usepackage{xcolor}
\usepackage[margin=1.18in]{geometry}
\usepackage[
  colorlinks=true,
  linkcolor=blue!60!black,
  citecolor=blue!60!black,
  urlcolor=blue!60!black,
  pdftitle={Exchange identities and symmetric slices of the valley Delta conjecture},
  pdfauthor={Henry Shin},
  pdfsubject={Algebraic combinatorics; Delta conjecture},
  pdfkeywords={Delta conjecture, valley version, diagonal harmonics, Macdonald polynomials, parking functions, q-binomial identities, transfer operators}
]{hyperref}
\usepackage[expansion=false]{microtype}
\usepackage{needspace}
\newtheorem{theorem}{Theorem}[section]
\newtheorem{lemma}[theorem]{Lemma}
\newtheorem{proposition}[theorem]{Proposition}
\newtheorem{corollary}[theorem]{Corollary}
\newtheorem{conjecture}[theorem]{Conjecture}
\theoremstyle{definition}
\newtheorem{definition}[theorem]{Definition}
\newtheorem{remark}[theorem]{Remark}
\newtheorem{example}[theorem]{Example}

\newtheorem*{thmA}{Theorem A}
\newtheorem*{thmB}{Theorem B}
\newtheorem*{corC}{Corollary C}

\newcommand{\dd}{\mathfrak{d}}
\newcommand{\ddb}{\bar{\mathfrak{d}}}
\newcommand{\LL}{\mathbb{L}}
\newcommand{\BB}{\mathfrak{B}}
\newcommand{\sym}{\mathfrak{S}}
\newcommand{\onev}{\mathbf{1}}

\newcommand{\Val}{\operatorname{Val}}
\newcommand{\val}{\operatorname{val}}
\newcommand{\dinv}{\operatorname{dinv}}
\newcommand{\area}{\operatorname{area}}
\newcommand{\sortw}{\operatorname{sort}}
\newcommand{\Vall}{\operatorname{V}}
\newcommand{\rec}{\operatorname{rec}}
\newcommand{\LD}{\mathsf{LD}}
\newcommand{\qbin}[2]{\genfrac{[}{]}{0pt}{}{#1}{#2}_{q}}
\newcommand{\Mu}{M_u}
\newcommand{\Mv}{M_v}
\newcommand{\swap}{\mathsf{s}}
\newcommand{\Lam}{\Lambda}

\title[Symmetric slices of the valley Delta conjecture]{Exchange
identities and symmetric slices\\ of the valley Delta conjecture}

\author{Henry Shin}

\date{}

\subjclass[2020]{Primary 05E05; Secondary 05A30, 05E10}
\keywords{Delta conjecture, valley version, diagonal harmonics,
Macdonald polynomials, parking functions, $q$-binomial identities,
transfer operators}

\begin{document}

\begin{abstract}
The valley Delta conjecture of Haglund, Remmel and Wilson predicts
that the symmetric function $\Delta'_{e_{n-k-1}}e_n$ equals a
generating function over labelled Dyck paths with $k$ decorated
contractible valleys.  Unlike the rise version, which is now a
theorem, the valley version remains open; indeed it is not even known
that its combinatorial side is symmetric.  We prove that the
coefficients of $t^{0}$ and $t^{1}$ in the valley generating function
are symmetric functions for all $n\ge1$ and $k\ge0$; equivalently,
the
fixed--diagonal--multiset slices of area at most one are symmetric.
The theorem follows from an adjacent exchange identity for scaffold
classes, which we prove in a strictly stronger form refined by the
numbers of undecorated rows carrying the labels $r,r+1$ between
consecutive rows with other labels.  The proof develops a transfer--operator
calculus in a $q$--deformed two--variable algebra generated by two
commuting half--twists.  In this algebra the exchange reduces to two
scalar symmetric--series identities for the operator $T=u\dd+v$.  We
also verify the refined identity computationally at area two over
extensive finite ranges and state the resulting general conjecture.
\end{abstract}

\maketitle
\enlargethispage{2pt}

\setcounter{tocdepth}{1}

\section{Introduction}\label{sec:intro}

\subsection{The Delta conjectures}\label{ssec:delta}

The shuffle theorem of Carlsson and Mellit \cite{CarlssonMellit}
expresses the bigraded Frobenius characteristic of the ring of
diagonal harmonics, $\nabla e_n$, as a weighted generating function
over labelled Dyck paths.  The Delta conjecture of Haglund, Remmel and
Wilson \cite{HRW} proposes a far--reaching generalization: for
$0\le k<n$,
\begin{equation}\label{eq:delta}
\Delta'_{e_{n-k-1}}e_n
\;=\;
\mathrm{Rise}_{n,k}(x;q,t)
\;=\;
\Val_{n,k}(x;q,t),
\end{equation}
where $\Delta'_f$ is the eigenoperator on modified Macdonald
polynomials reviewed in Section~\ref{ssec:background-delta}, and the
two right--hand sides are generating functions over labelled Dyck
paths carrying $k$ decorations: on \emph{rises} in the first case, on
\emph{contractible valleys} in the second.  The rise version is now a
theorem: D'Adderio and Mellit \cite{DAdderioMellit} proved its
compositional refinement, and Blasiak, Haiman, Morse, Pun and
Seelinger \cite{BHMPS} proved the extended version by entirely
different methods.

The valley version, by contrast, is open, and it carries independent
weight: it is the valley statistic, not the rise statistic, whose refinements match the conjectural module-theoretic picture.
Zabrocki \cite{Zabrocki} conjectured that $\Delta'_{e_{n-k-1}}e_n$ is
the Frobenius characteristic of a module of super--diagonal
coinvariants, and the conjectural monomial basis for this module
proposed in \cite{DIVW} predicts Hilbert--series refinements that
agree with the valley side of \eqref{eq:delta}, not with the rise
side (see the discussion in \cite{DAdderioIraci}).  What is known
about the valley version is limited: Qiu and Wilson \cite{QiuWilson}
proved its extended form at $t=0$ or $q=0$ via ordered multiset
partitions; D'Adderio and Iraci \cite{DAdderioIraci} proved the
Schr\"oder case $\langle\,\cdot\,,e_{n-d}h_d\rangle$; and Iraci and
Vanden Wyngaerd \cite{IVWpush,IVWsquare} showed that the valley Delta
conjecture implies its generalized and square versions.  As
D'Adderio and Iraci emphasize, away from these specializations even
the basic structural property is unknown: it is open whether the
combinatorial side $\Val_{n,k}(x;q,t)$ is a symmetric function at all
\cite{DAdderioIraci}.

\subsection{Main results}\label{ssec:main-results}

This paper proves the first positive--area symmetry result for the
valley side.  Write $\Val_{n,k}(x;q,t)=\sum_{m}
t^{\area(m)}\,\Vall_{n,k,m}(x;q)$, where the sum runs over
\emph{diagonal multisets} $m$ (the multiset of diagonals occupied by
the path) and $\Vall_{n,k,m}$ collects the paths with diagonal
multiset $m$; precise definitions are in
Section~\ref{ssec:background-paths}.  The multisets of area $0$ and
$1$ are unique ($m=0^n$ and, for $n\ge2$, $m=0^{n-1}1$), so the
$t^0$ and $t^1$ coefficients of $\Val_{n,k}$ are exactly the slices
$\Vall_{n,k,0^n}$ and $\Vall_{n,k,0^{n-1}1}$ (the latter vacuous for
$n=1$).

\begin{corC}
For all $n\ge 1$ and $k\ge 0$, the slices $\Vall_{n,k,0^{n}}$ and
(for $n\ge2$) $\Vall_{n,k,0^{n-1}1}$ are symmetric functions.
Equivalently, the coefficients of $t^{0}$ and of $t^{1}$ in
$\Val_{n,k}(x;q,t)$ are symmetric functions.
\end{corC}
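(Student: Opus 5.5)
The plan is the standard route to symmetry for quasisymmetric generating functions: show that the slice, expanded in fundamental quasisymmetric functions or written as a sum over word labellings, is invariant under each adjacent transposition $s_r$ of the alphabet, i.e.\ under the exchange of the letters $r$ and $r+1$. Write $\Vall_{n,k,m}(x;q)=\sum_{P} q^{\dinv(P)}x^{P}$, the sum running over valley-decorated labelled Dyck paths $P$ with diagonal multiset $m$ and $k$ decorated contractible valleys. It suffices to show that for each $r$ and each $m\in\{0^n,0^{n-1}1\}$ the polynomial in $x_r,x_{r+1}$ obtained by freezing all other labels is symmetric in $x_r,x_{r+1}$.

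Concretely, I would group paths into \emph{scaffold classes}. Fix the underlying path, the decoration pattern, and the positions and values of all labels other than $r,r+1$. The remaining rows carry labels in $\{r,r+1\}$. These rows split into maximal runs of consecutive undecorated $\{r,r+1\}$-rows lying between rows with other labels, and I would record their lengths. The claim to prove is an exchange identity. Summed over a scaffold class, $\sum q^{\dinv}x_r^{a}x_{r+1}^{b}$ equals the same sum with $a$ and $b$ swapped, even refined by the vector of run lengths, and this is the stronger form stated in the abstract. Corollary C then follows at once, since symmetry under every $s_r$ gives a symmetric function.

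For area $0$ and $1$ the geometry is rigid: every row lies on diagonal $0$, apart from possibly one row on diagonal $1$. So dinv pairs are just pairs of rows on the same diagonal with the correct label order, plus pairs across adjacent diagonals. Contractibility of a valley is likewise a local condition on consecutive labels. I would encode a scaffold class as a product of transfer operators read bottom to top. Each undecorated $\{r,r+1\}$-row contributes a factor $u$ or $v$, weighted by how many later rows its label creates dinv with. Other-label rows act as fixed separators. In the resulting $q$-deformed algebra, commuting $u$ past the separators introduces $q$-powers, which realize the half-twists. The operator $T=u\dd+v$ then models one row whose label is either smaller, with a dinv shift, or larger. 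A run of length $\ell$ contributes $T^{\ell}$, and decorated valleys with label $r$ or $r+1$ contribute a restricted factor, because contractibility forbids certain adjacencies.

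The exchange then reduces to showing that two generating series are invariant under $u\leftrightarrow v$. One is $\sum_\ell T^\ell z^\ell$, or its coefficientwise form $\sum_j q^{\binom{j}{2}}\qbin{\ell}{j}u^jv^{\ell-j}$-type expansions; the other is its variant with a decorated boundary row. These are $q$-binomial identities of the form $\qbin{\ell}{j}=\qbin{\ell}{\ell-j}$ together with their decorated analogues. The main obstacle, which I would attack first, is the interaction at area one. There the single diagonal-$1$ row creates dinv across diagonals that depends on which neighbouring label is $r$ versus $r+1$, and a decorated valley may sit adjacent to a run. I would handle this by case analysis on whether the diagonal-$1$ row carries label $r$ or $r+1$ and whether it borders a run. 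In each case I would verify that the boundary factor commutes appropriately with the symmetric series. If a direct bijection fails, I would fall back on a Lindström/Bender–Knuth style involution that swaps unpaired $r$'s and $(r+1)$'s inside each run, correcting the dinv discrepancy by the $q$-binomial symmetry.
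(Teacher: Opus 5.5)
There is a genuine gap, and it lies in the choice of classes. The identity you set out to prove is false at the granularity you fix. You freeze the path, the decoration set $S$, and the positions and values of all rows whose labels lie outside $\{r,r+1\}$.

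Take $n=4$, $m=0^4$, $k=1$, $r=1$, with a single undecorated row labelled $3$.
\begin{itemize}
\item If that row is row~$2$, validity forces the decoration onto row~$4$ with $(w_3,w_4)=(1,2)$. Your class is $\{1312,\,2312\}$, with generating function $q^2u^2v+q\,uv^2$.
\item If it is row~$3$, the decoration is forced onto row~$2$ with $(w_1,w_2)=(1,2)$. The class is $\{1231,\,1232\}$, giving $q\,u^2v+q^2uv^2$.
\end{itemize}
Neither is symmetric; only their sum $(q+q^2)(u^2v+uv^2)$ is (this is Example~\ref{ex:sharp}). In both configurations each gap contains exactly one \emph{undecorated} active row, so the decorated row has migrated across the wall.

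The classes on which exchange actually holds fix less. They fix only the relative order of the outside rows (with their labels, diagonals and flags) and the number of undecorated active rows in each gap. One then sums over all interleavings and decoration sets compatible with these data (Definition~\ref{def:refined}). This is what the refinement in the abstract means.

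Your own algebra already points this way. $T=u\dd+v=u(1+qzv)\sigma_v+v$ sums over whether an undecorated $r$ is followed by a decorated $r+1$, so ``a run of length $\ell$ contributes $T^{\ell}$'' is incompatible with a fixed decoration pattern. For the same reason your Bender--Knuth fallback cannot work: an involution preserving your classes would prove a false identity. Moreover, Proposition~\ref{prop:noR} shows that even the coarse model admits no letter-level intertwiner.

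At area one, fixing the path causes the same failure for the diagonal-one active row $P$. The placement of $P$ directly after a low wall must be summed with the placement of $P$ after a decorated $r$ heading that wall's cluster. These two placements have different paths and different $S$.

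For instance, take $r=2$, $n=4$, $k=1$ and an undecorated wall labelled $1$ in row~$1$. The objects with $P$ in row~$2$ give $[u^2v]=2q+q^2+q^3$ but $[uv^2]=q+2q^2+q^3$. Symmetry is restored only after adding the objects with a decorated $2$ in row~$2$ and $P$ in row~$3$, which contribute $q^2u^2v+q\,uv^2$.

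After this grouping, the needed identity is not a $q$-binomial symmetry plus a commutation of a boundary factor. It is the symmetry of $\BB(g,M)=u\,T^{g+M}\onev+v\,T^{g}\ddb\,T^{M}\onev$ (Lemma~\ref{lem:B}), whose proof uses Corollary~\ref{cor:TbarT} and the recursion~\eqref{eq:Brec} in $g$.

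Even your flat-case series is off. $\sum_j q^{\binom j2}\qbin{\ell}{j}u^jv^{\ell-j}$ is not symmetric; the symmetric object is $T^{m}\onev=\sum_a\qbin{m}{a}u^av^{m-a}(-qzv;q)_a$. Its $z^{\kappa}$-part needs the subset-of-subset identity, not only $\qbin{\ell}{j}=\qbin{\ell}{\ell-j}$.
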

This result is proved as Corollary~\ref{cor:slices} in
Section~\ref{sec:proofs}.

For area one this is, to our knowledge, the first symmetry statement
for any positive--area slice of the valley side; for area zero it
gives a new proof, independent of the ordered--multiset--partition
methods of \cite{QiuWilson,Rhoades}, of the symmetry implicit in the
$t=0$ case of the conjecture.  We emphasize the boundary of the
claim: we prove symmetry of these slices of the \emph{combinatorial}
side, and do not identify them with the corresponding coefficients of
$\Delta'_{e_{n-k-1}}e_n$, which would be the $t^0$ and $t^1$ cases of
the conjecture itself (see Remark~\ref{rem:QW}).

Corollary~C is obtained through the \emph{exchange} strategy.  A
formal series of bounded degree is symmetric if and only if it is
invariant under each adjacent transposition $x_r\leftrightarrow
x_{r+1}$; grouping the paths contributing to $\Vall_{n,k,m}$ according
to all of their data except what involves the labels $r,r+1$ (the resulting datum is called a \emph{scaffold} $\Xi$, recording the
relative order, diagonals, labels and decorations of the other rows) reduces the invariance to a two--variable identity for each
\emph{class} $(m,k,\Xi)$: writing $u,v$ for the variables tracking the
labels $r$ and $r+1$ and $\Phi^{(r)}_{m,k,\Xi}(u,v;q)$ for the class
generating function, one needs
\begin{equation}\label{eq:exchange-intro}
\Phi^{(r)}_{m,k,\Xi}(u,v;q)\;=\;\Phi^{(r)}_{m,k,\Xi}(v,u;q).
\end{equation}
This is the route by which Haglund, Haiman and Loehr proved the
symmetry of the LLT and Macdonald combinatorial formulas
\cite{HHL}.  We call identity \eqref{eq:exchange-intro}, required
for all $m,k,\Xi,r$, the \emph{scaffold exchange conjecture} and
formulate it precisely in Conjecture~\ref{conj:exchange}.  Decorations
obstruct the classical local exchange involutions: the swaps that
prove \cite{HHL}--type symmetry move rows across one another, while
the validity of a decoration depends on the row immediately to its
left.  The scaffold exchange conjecture is open in general, but
Proposition~\ref{prop:reduction} shows that it implies the symmetry
of every slice $\Vall_{n,k,m}$, hence of $\Val_{n,k}$ itself.

Our main theorem establishes the conjecture in every class of area at
most one, in a strictly stronger, refined form.  The rows of an
object whose labels avoid $\{r,r+1\}$ cut the remaining (\emph{active})
rows into \emph{gaps}; the refinement fixes, in addition to
$(m,k,\Xi)$, the number of \emph{undecorated} active rows in each gap.

\Needspace{14\baselineskip}
\begin{thmA}
Let $n\ge1$ and let $m$ be a size--$n$ diagonal multiset of area at
most one, i.e.\ $\sortw(m)\in\{0^n,\,0^{n-1}1\}$, the second case
occurring only for $n\ge2$.  For every $k\ge0$, $r\ge1$, and every scaffold $\Xi$ of
length $s$, and for every $g=(g_0,\dots,g_s)\in\mathbb{Z}^{s+1}$
interpreted as per--gap undecorated--active counts (the walls of
$\Xi$ cut the active rows into gaps $0,\dots,s$;
Definition~\ref{def:refined}), the refined class generating function
satisfies
\[
\Phi^{(r)}_{m,k,\Xi,g}(u,v;q)\;=\;\Phi^{(r)}_{m,k,\Xi,g}(v,u;q).
\]
\end{thmA}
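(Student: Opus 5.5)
We would follow the route announced in the abstract: turn the refined class generating function into a product of transfer operators, and reduce the exchange identity to scalar identities for the single operator $T=u\dd+v$.

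\textbf{Setup.} Fix $(m,k,\Xi,g)$ with $\sortw(m)\in\{0^n,0^{n-1}1\}$. The walls of $\Xi$ are the rows whose labels avoid $\{r,r+1\}$. They split the active rows into gaps $0,\dots,s$, and gap $i$ contains exactly $g_i$ undecorated active rows. For area zero, every row lies on diagonal $0$. The labels then weakly increase within runs, and $\dinv$ counts pairs of rows on the same diagonal with the lower label read earlier, so the $q$-weight factors gap by gap. The only interaction across a wall is through the wall's label and whether a valley decoration is legal there. Legality depends only on the row immediately to the left.

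\textbf{Step 1: a transfer-operator form.} We would introduce the $q$-deformed two-variable algebra generated by two commuting half-twists. Here $\dd$ acts on $\mathbb{Z}[q][u,v]$ by interchanging the roles of $u$ and $v$ up to a $q$-shift, and $\ddb$ is its companion. We would write the contribution of a single gap with $g_i$ undecorated and $d_i$ decorated active rows as a word in $u,v,\dd,\ddb$. Each active row contributes a factor $u$ or $v$. Comparing $r$ with $r+1$ multiplies by $q$ exactly when an $r+1$ precedes an $r$ on the same diagonal, and this is what the half-twist records. A decorated row is constrained by its left neighbour and contributes a factor that commutes appropriately. Summing over the fillings of a gap then gives an expression in $T=u\dd+v$: the undecorated part is essentially $T^{g_i}$, and the decorated part is a symmetric series in $T$. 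The wall between consecutive gaps acts as a fixed operator $W_j$ determined by $\Xi$. This expresses $\Phi^{(r)}_{m,k,\Xi,g}$ as the product $\langle \cdots W_j\, F_{g_j}(T)\, W_{j+1}\cdots\rangle$, evaluated at $1$.

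\textbf{Step 2: reduce to scalar identities.} Swapping $u\leftrightarrow v$ is conjugation by an involution $\sigma$ of the algebra. We would check that $\sigma$ fixes each wall operator, which should hold because walls do not carry the labels $r,r+1$. It then suffices to show that $\sigma(F_g(T))=F_g(T)$ holds modulo the kernel of evaluation at $1$. Since $g$ is fixed and the number of decorations $k$ is distributed across gaps, we would take generating functions over the decoration counts. This leaves two identities: one for area $0$, and one for the single area-one row. In the area-one case the row on diagonal $1$ sits either inside a gap, which changes the $\dinv$ comparisons with diagonal-$0$ rows by a $q$-shift, or it is a wall. Each identity asserts that a series $\sum_j c_j(q)\,T^j\,\bar T^{\,g-j}$ is $\sigma$-symmetric, where $\bar T=v\ddb+u$. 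We would prove these by expanding via the $q$-binomial theorem in the commuting half-twists. Both sides then become explicit sums of $q$-binomials times monomials $u^av^b$, and a $q$-Vandermonde / $q$-Chu identity identifies the coefficients.

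\textbf{Main obstacle.} The hardest step is Step 1 at the gap boundaries. A decoration on the first active row of a gap depends on the wall's label, and a decoration on the first wall after a gap depends on the last active label. This breaks naive factorization. We would first try to absorb the dependence into the wall operators by splitting $W_j$ into two pieces according to whether the preceding row is $r$ or $r+1$, and then verify that this split is itself $\sigma$-equivariant. The area-one case adds a further difficulty when the diagonal-$1$ row is active and adjacent to a wall. We would treat it by a separate case analysis and reduce it to the area-zero identity with one shifted exponent.
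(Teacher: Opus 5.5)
Your outline follows the abstract's slogan, but the two steps that carry the proof are not right.

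\textbf{The weight does not factor by gaps, and Step~2 does not reduce to per-gap identities.} An undecorated row labelled $r$ attacks every later row labelled $r+1$ on its diagonal in \emph{every} later gap, decorated or not. Note that the attack is $r$ before $r+1$, not the reverse; also, in area zero there are no rises, so labels are unconstrained. In addition, an undecorated low wall attacks every later active row, and undecorated actives attack every later high wall. These long-range attacks are why the half-twist must be the dressed dilation $\dd=(1+qzv)\sigma_v$, with $\sigma_v\colon v\mapsto qv$, acting on everything to its right. It is not an interchange of $u$ and $v$.

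Decisively, the reduction in Step~2 is invalid. Conjugation by the swap does not fix $T$; it sends it to $\bar T=v\ddb+u$. Moreover $T$ does not preserve symmetric polynomials: the $z$-linear part of $T(u+v)$ is $q\,u^{2}v+q^{2}uv^{2}$. So symmetry of a gap factor ``modulo the kernel of evaluation at $\onev$'' says nothing about a factor acting on the non-constant output of the factors to its right. In $T^{g_0}W_1T^{g_1}\cdots W_sT^{g_s}\onev$ only the last factor ever meets $\onev$.

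The missing idea is that the wall operators are quasi-central. An undecorated low wall with its head cluster is $\LL=\dd\ddb=(1+qzu)(1+qzv)\sigma_u\sigma_v$. It commutes with the swap and satisfies $\LL T=qT\LL$, while high walls act by the identity. So all walls can be pulled to the left at the cost of color-blind powers of $q$, collapsing the word to $q^{c}\LL^{j}T^{|g|}\onev$. Only then does the single identity $T^{m}\onev\in\sym$ finish the flat case and Case~A; that identity is the $q$-binomial expansion of $T^m$ plus a subset-of-subset Gaussian identity.

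\textbf{The boundaries and Case~B also miss the mechanism.} Splitting wall operators by the color of the preceding row is neither needed nor helpful, since the swap exchanges such pieces rather than fixing them. Every validity condition at a wall is color-blind:
\begin{itemize}
\item a decorated high wall, or a high diagonal-one wall, is valid after any active row;
\item a decorated low wall, or a low diagonal-one wall, is valid after no diagonal-zero active. It must therefore be glued to the preceding wall (or to $P$ in Case~B), whose head cluster is then forced empty.
\end{itemize}
Hence the gates depend only on $\Xi$ and $g$ (and on the placement of $P$). This is exactly why the refinement counts \emph{undecorated} rows. Apart from the head cluster right after a low wall, which contributes the symmetric factor $(1+zu)(1+zv)$, a decorated active can only be a $2$ immediately after an undecorated $1$, which is the $z$-term of $T$.

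In Case~B the active diagonal-one row $P$ is not a $q$-shifted copy of the area-zero situation:
\begin{itemize}
\item it receives no attacks, so its variable must be shielded from all dilations;
\item its rise forces color $2$ when its predecessor is an active $1$;
\item when $P$ opens a gap after a low wall, the placements ``directly after the wall'' and ``after the decorated head $1^{*}$'' are separately asymmetric.
\end{itemize}
Only their sum is symmetric, and it produces $u\,T^{g+M}\onev+v\,T^{g}\ddb\,T^{M}\onev$, with $\ddb$ inserted inside the $T$-word. This needs a genuinely new identity, Lemma~$\BB$, proved by a first-order recursion in $g$ from the base case $\bar T\,T^{M}\onev=T^{M+1}\onev$. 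Your plan to reduce Case~B ``to the area-zero identity with one shifted exponent'' gives no route to it.
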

This result is proved as Theorem~\ref{thm:main} in
Section~\ref{sec:proofs}.

\begin{thmB}
The scaffold exchange conjecture holds for every diagonal multiset of
area at most one.
\end{thmB}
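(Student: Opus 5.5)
Theorem~B is the coarsening of Theorem~A obtained by summing over the refinement parameter $g$, so the plan is to deduce it by a finite summation. The point to get right is that the unrefined class generating function decomposes as
\[
\Phi^{(r)}_{m,k,\Xi}(u,v;q)\;=\;\sum_{g\in\mathbb{Z}^{s+1}}\Phi^{(r)}_{m,k,\Xi,g}(u,v;q),
\]
the sum being finite and the decomposition a genuine partition of the objects.

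First I will recall Definition~\ref{def:refined}. Every object in the class $(m,k,\Xi)$ has its active rows (labels $r$ or $r+1$) cut by the $s$ walls of $\Xi$ into gaps $0,\dots,s$. For each gap $j$, the number $g_j$ of undecorated active rows is a well-defined nonnegative integer determined by the object. Hence the class is the disjoint union over $g$ of the refined classes, and only $g$ with $0\le g_j\le n$ occur. This gives the displayed identity.

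Second, I will verify that the refinement uses only data invariant under the formal exchange $u\leftrightarrow v$. The refined class is defined by counting undecorated active rows per gap, without distinguishing label $r$ from $r+1$. It therefore imposes no condition that distinguishes the two variables, and the refined function $\Phi^{(r)}_{m,k,\Xi,g}(u,v;q)$ is a polynomial in $u,v,q$. The exchange then acts term by term: substituting $(v,u)$ into the sum gives $\sum_g\Phi^{(r)}_{m,k,\Xi,g}(v,u;q)$.

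Third, I will apply Theorem~A to each summand. The hypotheses match: $m$ has area at most one (so $\sortw(m)\in\{0^n,0^{n-1}1\}$), and $k,r,\Xi,g$ are arbitrary. Each summand is therefore symmetric under $u\leftrightarrow v$, and summing gives $\Phi^{(r)}_{m,k,\Xi}(u,v;q)=\Phi^{(r)}_{m,k,\Xi}(v,u;q)$. Since $m,k,\Xi,r$ were arbitrary, this is exactly Conjecture~\ref{conj:exchange} restricted to diagonal multisets of area at most one.

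The only step needing care is the first: confirming that the per-gap undecorated count is a function of the object alone, so the refined classes partition the unrefined class with no overlap or omission. The slight subtlety is that decorations on active rows are part of the object and not of $\Xi$, so $g$ is not determined by the scaffold. It is nevertheless a single-valued statistic of each object, which is all the decomposition requires. If Definition~\ref{def:refined} allows $g_j$ to be negative or out of range, those refined classes are empty and contribute zero, consistent with $g$ ranging over all of $\mathbb{Z}^{s+1}$ in Theorem~A.
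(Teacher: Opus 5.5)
Your proposal is correct and matches the paper's proof, which likewise writes $\Phi^{(r)}_{m,k,\Xi}=\sum_g\Phi^{(r)}_{m,k,\Xi,g}$ (Definition~\ref{def:refined}) and applies Theorem~\ref{thm:main} (Theorem~A) to each summand. Your second step is not actually needed: the swap $u\leftrightarrow v$ is a ring automorphism and so commutes with any finite sum, whatever data define the refinement. Also, the coefficients lie in $\mathbb{Z}[q,q^{-1}]$, not $\mathbb{Z}[q]$, since $\dinv$ can be negative.
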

This result is proved as Theorem~\ref{thm:conj} in
Section~\ref{sec:proofs}.

The refinement in Theorem~A is sharp in a precise sense.  Fixing
instead the \emph{total} number of active rows in each gap (equivalently, the absolute positions of the non--active rows) destroys the symmetry already for $m=0^4$, $k=1$, with a single
non--active row (Example~\ref{ex:sharp}); the failure is repaired
exactly when decorated rows are allowed to migrate within their gap
count, which is what counting \emph{undecorated} rows achieves.  The
refined statement appears to be a feature of the conjecture itself
rather than of low area: we have verified it at area two
exhaustively over finite ranges ($40{,}143$ class verifications,
$37{,}027$ distinct refined classes, in an even finer, per--diagonal
form) and conjecture it for all diagonal
multisets (Conjecture~\ref{conj:refined}).

\subsection{Method: a transfer calculus for decorated
rows}\label{ssec:method}

The proof is by an operator calculus that may be of independent
interest, since it gives a workable substitute for the missing
exchange involution in the presence of decorations.  After a
\emph{dictionary lemma} (Lemma~\ref{lem:dict}) converts the statistic
$\dinv$ and all validity constraints of a class into local data of a
word in six letter types, the generating function of a refined class
becomes a word in a small algebra of $q$--difference operators on
$\mathbb{Z}[q,q^{-1}][u,v,z]$, built from two commuting
\emph{half--twists}
\[
\dd=(1+qzv)\,\sigma_v,
\qquad
\ddb=(1+qzu)\,\sigma_u
\qquad(\sigma_u\colon u\mapsto qu,\ \ \sigma_v\colon v\mapsto qv),
\]
where $z$ marks decorations, and we write $\onev$ for the
constant polynomial $1$.  A single transfer operator
$T=u\dd+v$ realizes one undecorated active row together with its
optional trailing decorated companion; the operator
$\LL=\dd\ddb$ realizes an undecorated low non--active row together
with its decorated head cluster; and the diagonal--one row of the
$t^1$ slice enters through one extra letter.  The generators satisfy
quantum--torus relations
($\dd\Mu=\Mu\dd$, $\dd\Mv=q\Mv\dd$, and mirror images), $\LL$ is
central up to the scaling $\LL T=qT\LL$, and every refined class
collapses to an explicit scalar times one of
\[
\LL^{k}\,T^{m}\onev,
\qquad
uv\cdot\LL^{k}T^{m}\onev,
\qquad
\LL^{k}\bigl(u\,T^{g+M}\onev+v\,T^{g}\ddb\,T^{M}\onev\bigr).
\]
Symmetry of the first two families reduces to a Gaussian--binomial
identity (Theorem~\ref{thm:identityI}); symmetry of the third, Lemma~$\BB$ (Lemma~\ref{lem:B}), follows from an exact
first--order recursion in $g$.  We also show
(Proposition~\ref{prop:noR}) that no intertwining $R$--matrix for the
letter operators themselves can exist, even in the undecorated
single--diagonal case; the two scalar identities appear to be the
correct replacement.  For the diagonal set $\{0,1\}$, this identifies the precise
quantum--torus transfer structure needed for decorated rows.

The computational checks are independent of the proofs: the complete
chain (dictionary, operator identities, and the assembled formulas
with all their gates and gluings) has been verified by computer
against brute--force enumeration, $58{,}013$ refined--class
verifications ($53{,}054$ distinct classes) with exact agreement
(Section~\ref{ssec:verification}).

\subsection{Organization}\label{ssec:organization}

Section~\ref{sec:background} recalls the combinatorial model, states
the conjectures, and proves the reduction from exchange to symmetry.
Section~\ref{sec:dictionary} develops the two--letter dictionary and
presents the sharpness example.  Section~\ref{sec:algebra} introduces
the transfer algebra and proves the fundamental identity~(I).
Sections~\ref{sec:walls} and~\ref{sec:caseB} carry out the assembly:
walls, gates and gluing in Section~\ref{sec:walls}; the diagonal--one
active row and Lemma~$\BB$ in Section~\ref{sec:caseB}.
Section~\ref{sec:proofs} assembles the main theorems.
Section~\ref{sec:complements} contains the impossibility result for
letter--level $R$--matrices, the general refined conjecture with its
computational evidence, the verification protocol, and open problems.

\section{Background and the exchange strategy}\label{sec:background}

\subsection{Labelled Dyck paths, valleys, and
\texorpdfstring{$\dinv$}{dinv}}\label{ssec:background-paths}

We encode labelled Dyck paths by their area words.  Throughout,
$n\ge 1$ and $[n]=\{1,\dots,n\}$.

\begin{definition}\label{def:objects}
A \emph{labelled path} of size $n$ is a pair $(a,w)$ where
$a=(a_1,\dots,a_n)\in\mathbb{Z}_{\ge0}^{\,n}$ is an \emph{area word},
meaning $a_1=0$ and $a_{j+1}\le a_j+1$ for $1\le j<n$, and
$w=(w_1,\dots,w_n)\in\mathbb{Z}_{>0}^{\,n}$ is a \emph{label word}
satisfying the \emph{rise condition}: $w_j<w_{j+1}$ whenever
$a_{j+1}=a_j+1$.  We call $a_j$ the \emph{diagonal} of row $j$ and set
$\area(a):=\sum_j a_j$.  The \emph{diagonal multiset} of $(a,w)$ is
the multiset $m=\{\!\{a_1,\dots,a_n\}\!\}$; for a diagonal multiset
$m$ we write $\sortw(m)$ for its elements listed in weakly increasing
order.

Row $j$ is a \emph{contractible valley} if $j\ge2$ and either
$a_{j-1}>a_j$ (a \emph{drop} valley) or $a_{j-1}=a_j$ and
$w_{j-1}<w_j$ (a \emph{flat} valley); we write $\val(a,w)$ for the
set of contractible valleys.  The \emph{attack set} is
\[
A(a,w)=\{(i,j): i<j,\ a_i=a_j,\ w_i<w_j\}\ \cup\
        \{(i,j): i<j,\ a_i=a_j+1,\ w_i>w_j\},
\]
attacks of the first kind being \emph{primary} and of the second
\emph{secondary}.  A \emph{$k$--decorated} labelled path is a triple
$(a,w,S)$ with $S\subseteq\val(a,w)$, $|S|=k$, and its statistic is
\[
\dinv(a,w,S)\;:=\;\#\{(i,j)\in A(a,w):\ i\notin S\}\;-\;|S|.
\]
We write $\LD(n)^{\bullet k}$ for the set of $k$--decorated labelled
paths of size $n$.
\end{definition}

These conventions agree with those of
\cite{HRW,QiuWilson,DAdderioIraci}: in the lattice--path picture,
$a_j$ is the number of full cells between the path and the diagonal
in the $j$--th row, the rise condition is the column--strictness of
labels, decorated valleys may be contracted, and removing the attacks
emitted by decorated rows together with the correction $-|S|$
produces the valley--decorated $\dinv$.  The valley generating
function and its slices are
\begin{equation}\label{eq:Val}
\Val_{n,k}(x;q,t):=
\!\!\sum_{(a,w,S)\in\LD(n)^{\bullet k}}\!\!
q^{\dinv(a,w,S)}\,t^{\area(a)}\,x^{w},
\qquad
\Vall_{n,k,m}(x;q):=
\!\!\sum_{\substack{(a,w,S)\in\LD(n)^{\bullet k}\\
\{\!\{a\}\!\}=m}}\!\!
q^{\dinv}\,x^{w},
\end{equation}
where $x^w=x_{w_1}\cdots x_{w_n}$, so that
$\Val_{n,k}=\sum_m t^{\area(m)}\Vall_{n,k,m}$ with
$\area(m)=\sum_{c\in m}c$.  These are formal quasisymmetric
functions in the variables $x$ with coefficients in
$\mathbb{Z}[q,q^{-1},t]$ (respectively $\mathbb{Z}[q,q^{-1}]$ for
$\Vall_{n,k,m}$); when symmetry is proved, they lie in the
corresponding coefficient extension of $\Lambda$.

\begin{remark}\label{rem:lowarea}
An area word containing the value $c\ge 1$ must contain $c-1$ (the
word starts at $0$ and never climbs by more than one), so the only
diagonal multiset of area $0$ is $m=0^{n}$, and for $n\ge2$ the only
one of area $1$ is $m=0^{n-1}1$ (for $n=1$ no multiset of area $1$
exists, and the $t^{1}$ coefficient of $\Val_{1,k}$ is $0$).  Hence
the $t^{0}$ and $t^{1}$ coefficients of $\Val_{n,k}$ are the single
slices $\Vall_{n,k,0^n}$ and $\Vall_{n,k,0^{n-1}1}$, the latter read
as $0$ when $n=1$.
\end{remark}

\begin{remark}\label{rem:qsym}
Each $\Vall_{n,k,m}$ is a quasisymmetric function: validity, the
attack set, and the decorations all depend on the label word only
through its pattern of strict inequalities and equalities, so the
coefficient of $x_{i_1}^{\alpha_1}\cdots x_{i_\ell}^{\alpha_\ell}$
($i_1<\dots<i_\ell$) depends only on the composition
$(\alpha_1,\dots,\alpha_\ell)$.  Symmetry is the open property.
\end{remark}

\subsection{Delta operators and the valley
conjecture}\label{ssec:background-delta}

Let $\Lam$ be the algebra of symmetric functions over
$\mathbb{Q}(q,t)$ and $\{\tilde H_\mu\}$ the modified Macdonald basis;
see \cite{HaglundBook}.  For a symmetric function $f$, the operators
$\Delta_f,\Delta'_f$ act diagonally by
$\Delta_f\tilde H_\mu=f[B_\mu]\tilde H_\mu$ and
$\Delta'_f\tilde H_\mu=f[B_\mu-1]\tilde H_\mu$, where
$B_\mu=\sum_{(i,j)\in\mu}q^{j-1}t^{i-1}$.

\begin{conjecture}[valley Delta conjecture; Haglund--Remmel--Wilson
\cite{HRW}]\label{conj:valley}
For $0\le k<n$,
\(
\Delta'_{e_{n-k-1}}e_n=\Val_{n,k}(x;q,t).
\)
\end{conjecture}

The rise analogue of Conjecture~\ref{conj:valley} is a theorem
\cite{DAdderioMellit,BHMPS}.  For the valley version, the cases
$t=0$ and $q=0$ of the extended form are proved in \cite{QiuWilson}
(see also \cite{Rhoades,HRS} for the underlying ordered--set--%
partition combinatorics), the Schr\"oder case in
\cite{DAdderioIraci}, and the implications to the generalized and
square valley versions in \cite{IVWpush,IVWsquare}.  Since
Conjecture~\ref{conj:valley} would identify $\Val_{n,k}$ with a
symmetric function, the symmetry of $\Val_{n,k}$ is a necessary
consistency property; it is open in general \cite{DAdderioIraci}.
Symmetry of $\Val_{n,k}$ amounts to the symmetry of the coefficient
of each power of $t$, i.e.\ of the \emph{sum} of all slices
$\Vall_{n,k,m}$ over the multisets $m$ of a given area; the symmetry
of every individual slice is an a priori stronger property.  It is
the stronger property that is established here for $\area(m)\le 1$, where, the multiset of each area being unique
(Remark~\ref{rem:lowarea}), the two properties in fact coincide.

\subsection{Scaffolds, classes, and the exchange
reduction}\label{ssec:background-exchange}

Fix $r\ge 1$.  Labels in $\{r,r+1\}$ are \emph{active}; we call them
\emph{colors} $1$ and $2$.  All other labels are \emph{outside}.

\begin{definition}\label{def:scaffold}
Let $(a,w,S)\in\LD(n)^{\bullet k}$.  The \emph{scaffold} of
$(a,w,S)$ with respect to $r$ is the sequence
$\Xi=\bigl((d_t,\ell_t,\varepsilon_t)\bigr)_{t=1}^{s}$ recording, in
left--to--right order, the diagonal, label and decoration indicator
of the rows with outside labels; thus $d_t\in\mathbb{Z}_{\ge0}$,
$\ell_t\in\mathbb{Z}_{>0}\setminus\{r,r+1\}$, and
$\varepsilon_t\in\{0,1\}$, and we call any finite sequence of such
triples a \emph{scaffold} for $r$, whether or not it arises from an
object.  For fixed data $(m,k,\Xi)$ the
\emph{class} is the set of all $(a,w,S)\in\LD(n)^{\bullet k}$ with
diagonal multiset $m$ and scaffold $\Xi$, and its generating function
is
\[
\Phi^{(r)}_{m,k,\Xi}(u,v;q)\;:=\;
\sum_{(a,w,S)}\;
q^{\dinv(a,w,S)}\;
u^{\#\{j:\,w_j=r\}}\;
v^{\#\{j:\,w_j=r+1\}}.
\]
If no such object exists (for instance if $m$ is not realizable
as the diagonal multiset of a size--$n$ area word, or no object with
multiset $m$ has scaffold $\Xi$), the class is empty and
$\Phi^{(r)}_{m,k,\Xi}:=0$.
\end{definition}

\begin{conjecture}[scaffold exchange]
\label{conj:exchange}
For every $n\ge1$, $k\ge0$, and $r\ge1$, every diagonal multiset
$m$ of size $n$, and every scaffold
$\Xi$,
\[
\Phi^{(r)}_{m,k,\Xi}(u,v;q)\;=\;\Phi^{(r)}_{m,k,\Xi}(v,u;q).
\]
\end{conjecture}

\begin{remark}\label{rem:D}
One may also fix, in addition, the number $D$ of attacks among outside
rows with undecorated emitter.  This datum is
redundant: for outside rows at scaffold positions $t<t'$ the attack
condition reads $(d_t=d_{t'}\wedge\ell_t<\ell_{t'})$ or
$(d_t=d_{t'}+1\wedge\ell_t>\ell_{t'})$, and the pair is counted iff
$\varepsilon_t=0$; every ingredient is a function of $\Xi$ alone.  We
write $D=D(\Xi)$ for this constant.
\end{remark}

The reduction from exchange to symmetry is the same mechanism by
which Haglund, Haiman and Loehr proved the symmetry of the
combinatorial Macdonald and LLT formulas \cite{HHL}.  We include the
short proof for completeness.

\begin{proposition}\label{prop:reduction}
Fix $n\ge1$, $k\ge0$, and a size--$n$ diagonal multiset $m$.  If
Conjecture~\ref{conj:exchange} holds for $(m,k,\Xi,r)$ for every
scaffold $\Xi$ and every $r\ge1$, then $\Vall_{n,k,m}$ is a symmetric
function.
\end{proposition}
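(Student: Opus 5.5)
We reduce symmetry to invariance under each adjacent transposition $x_r\leftrightarrow x_{r+1}$, and get that invariance by summing the class identities of Conjecture~\ref{conj:exchange} over all scaffolds. Three steps are needed.

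\emph{Step 1: the coefficientwise criterion.} By Remark~\ref{rem:qsym}, $\Vall_{n,k,m}$ is a formal power series in $x_1,x_2,\dots$ of bounded degree $n$ with coefficients in $\mathbb{Z}[q,q^{-1}]$. Such a series is symmetric if and only if, for every monomial $x^\beta$ and every $r\ge1$, the coefficient of $x^\beta$ equals the coefficient of $s_r x^\beta$, where $s_r$ exchanges $\beta_r$ and $\beta_{r+1}$. Indeed, adjacent transpositions generate every finitary permutation of the variables, and a monomial of degree $n$ involves only finitely many variables. So we fix $r$ and aim to prove $s_r\Vall_{n,k,m}=\Vall_{n,k,m}$.

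\emph{Step 2: decomposition into classes.} Every object $(a,w,S)$ with diagonal multiset $m$ has a well-defined scaffold $\Xi$ with respect to $r$. Hence $\LD(n)^{\bullet k}$ restricted to multiset $m$ is the disjoint union of the classes $(m,k,\Xi)$. For each object, write
\[
x^w=u^{\#\{j:w_j=r\}}\,v^{\#\{j:w_j=r+1\}}\cdot\prod_{t=1}^{s}x_{\ell_t},
\]
with $u=x_r$ and $v=x_{r+1}$. The second factor $x^{\Xi}:=\prod_t x_{\ell_t}$ depends only on $\Xi$. This gives
\[
\Vall_{n,k,m}(x;q)=\sum_{\Xi}x^{\Xi}\,\Phi^{(r)}_{m,k,\Xi}(x_r,x_{r+1};q).
\]
The sum is over all scaffolds, and empty classes contribute $0$. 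It is coefficientwise finite: a fixed monomial $x^\beta$ determines the multiset of outside labels, and there are only finitely many scaffolds of length at most $n$ with those labels. This bookkeeping is the step that needs care. We must check that nothing in $\Phi^{(r)}_{m,k,\Xi}$ depends on the outside labels beyond what $\Xi$ records, and that the split of $x^w$ into active and outside parts is exact. Both hold by the definitions.

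\emph{Step 3: apply exchange.} The factor $x^{\Xi}$ involves no variable $x_r$ or $x_{r+1}$, since $\ell_t\notin\{r,r+1\}$, so $s_r$ fixes it. Applying $s_r$ to the decomposition therefore gives
\[
s_r\Vall_{n,k,m}=\sum_\Xi x^{\Xi}\,\Phi^{(r)}_{m,k,\Xi}(x_{r+1},x_r;q).
\]
By the hypothesis, each summand equals $x^{\Xi}\,\Phi^{(r)}_{m,k,\Xi}(x_r,x_{r+1};q)$, so $s_r\Vall_{n,k,m}=\Vall_{n,k,m}$. This holds for every $r$, so Step 1 gives symmetry.
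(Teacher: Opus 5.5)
Your proposal is correct and follows the paper's proof essentially step for step: you reduce to the adjacent transpositions $x_r\leftrightarrow x_{r+1}$, regroup $\Vall_{n,k,m}$ by scaffold as $\sum_\Xi\bigl(\prod_t x_{\ell_t}\bigr)\Phi^{(r)}_{m,k,\Xi}(x_r,x_{r+1};q)$ with a local-finiteness check, and observe that the swap fixes the prefactor, so the exchange hypothesis makes each summand invariant. One small point: abstract scaffolds allow arbitrary diagonals $d_t$, so the finiteness is cleanest argued as the paper does, via the finitely many objects whose labels lie in the support of the fixed monomial, rather than by counting scaffolds with given labels.
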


\begin{proof}
$\Vall_{n,k,m}$ is a homogeneous formal series of degree $n$ in
$x_1,x_2,\dots$; it suffices to prove invariance under each adjacent
transposition $x_r\leftrightarrow x_{r+1}$, since these generate all
permutations of the variables and a bounded--degree series invariant
under all of them has coefficients constant on orbits of exponent
vectors.  Fix $r$ and group the objects by their scaffold:
\[
\Vall_{n,k,m}
=\sum_{\Xi}\Bigl(\textstyle\prod_{t=1}^{s}x_{\ell_t}\Bigr)\,
\Phi^{(r)}_{m,k,\Xi}(x_r,x_{r+1};q),
\]
which is simply the partition of the objects contributing to
$\Vall_{n,k,m}$ according to their scaffold: each object determines
its scaffold, every label of an object is either an outside label
$\ell_t$ (contributing $x_{\ell_t}$ to the prefactor) or one
of $r,r+1$ (counted by the exponents of $u,v$ in
$\Phi^{(r)}_{m,k,\Xi}$), and the weight $q^{\dinv}$ is carried by
the coefficients of $\Phi^{(r)}_{m,k,\Xi}$.  The scaffold sum is
locally finite: a fixed monomial $x^\alpha$ receives
contributions only from the finitely many objects whose labels lie
in the support of $\alpha$, hence from finitely many scaffolds, so the regrouping, and the termwise application of the swap below,
are valid coefficientwise.  The prefactor
$\prod_t x_{\ell_t}$ involves no $x_r,x_{r+1}$, so the swap
$x_r\leftrightarrow x_{r+1}$ fixes each summand by the assumed
exchange identity.
\end{proof}

The wall--free case $\Xi=\varnothing$, $m=0^n$ already contains the
basic decorated two--letter kernel; it appears below as
Corollary~\ref{cor:empty}.  Outside rows create the new difficulty:
the validity of a decoration is a condition on the immediate left
neighbour of the decorated row, which local exchange moves do not
preserve.  Theorem~\ref{thm:conj} establishes
Conjecture~\ref{conj:exchange} for all $m$ with $\area(m)\le1$, and
Theorem~\ref{thm:main} proves more: the exchange survives the finer
fibering of Definition~\ref{def:refined} below.

\section{The two--letter dictionary}\label{sec:dictionary}

For the rest of the paper we fix $r\ge1$ and a diagonal multiset $m$
with $\sortw(m)\in\{0^n,0^{n-1}1\}$, and we study the classes
$(m,k,\Xi)$.  We refer to outside rows as \emph{walls}; a wall with
$\ell_t<r$ has type $L$ (\emph{low}), one with $\ell_t>r+1$ type $H$
(\emph{high}); these are the only two possibilities, since scaffold labels
avoid $\{r,r+1\}$ (Definition~\ref{def:scaffold}).  Active rows number $N:=n-s$.  We first discard the following
immediate incompatibilities: if $m$ is not realizable by an area word of
size $n$ (so the case $0^{n-1}1$ occurs only for $n\ge2$,
Remark~\ref{rem:lowarea}), the multiset of wall diagonals is not
contained in $m$, the flags satisfy
$\#\{t:\varepsilon_t=1\}>k$, the number
$k-\#\{t:\varepsilon_t=1\}$ of decorated active rows exceeds $N$,
or $s>n$, then the class is empty and
there is nothing to prove.  We call the remaining data compatible, but a
compatible datum may still define an empty refined class because of the
local rise, valley, gate, and glue conditions recorded below.  The
structure of a compatible class then splits into three regimes:

\smallskip
\noindent\textbf{Flat case} ($m=0^n$): all rows lie on diagonal $0$.

\smallskip
\noindent\textbf{Case A} ($m=0^{n-1}1$, diagonal--$1$ row outside):
exactly one scaffold entry has $d_{t^*}=1$; we call this wall $W^*$,
with label $\ell^*$ of type $\tau^*\in\{L,H\}$.  All active rows lie
on diagonal $0$.

\smallskip
\noindent\textbf{Case B} ($m=0^{n-1}1$, diagonal--$1$ row active):
all scaffold entries have $d_t=0$, and exactly one active row,
denoted $P$, lies on diagonal $1$.

\smallskip
In Case~A we assume moreover $\varepsilon_{t^*}=0$: a decorated
$W^*$ makes the class empty, by (V5) of Lemma~\ref{lem:dict} below.

\smallskip
An object of the class is determined by: the interleaving of the $s$
walls (in scaffold order) with the $N$ active rows; a color
$c\in\{1,2\}$ for each active row; the set of decorated active rows,
of size $\kappa:=k-\#\{t:\varepsilon_t=1\}$; and, in Case~B, the
choice of $P$.  In Case~B, moreover, $\kappa\le N-1$ is necessary
for a nonempty class: $P$ is active and never decorated (its pred
lies on diagonal $0$, so $P$ sees a rise and is not a valley,
Definition~\ref{def:objects}); equivalently, every refined tuple
with $|g|=0$ vanishes in Case~B.

\begin{definition}[gaps; refined classes]\label{def:refined}
The walls divide the active rows into \emph{gaps} $0,1,\dots,s$, gap
$t$ lying between wall $t$ and wall $t+1$ (gap $0$ before wall $1$,
gap $s$ after wall $s$).  For an object $O$ let $g_t(O)$ be the
number of \emph{undecorated} active rows in gap $t$; in Case~B the
row $P$ is counted ($P$ is never decorated, by
Lemma~\ref{lem:dict}(V2)).  For an integer vector
$g=(g_0,\dots,g_s)\in\mathbb{Z}^{s+1}$ with $g_t\ge0$ for all $t$
and $\sum_tg_t=N-\kappa$, the \emph{refined class} $(m,k,\Xi,g)$ consists
of the objects with $g_t(O)=g_t$ for all $t$, and its generating
function is the class sum restricted to those objects:
\[
\Phi^{(r)}_{m,k,\Xi,g}(u,v;q):=
\sum_{O\in(m,k,\Xi,g)}q^{\dinv(O)}
 u^{\#\{j:\,w_j(O)=r\}}
 v^{\#\{j:\,w_j(O)=r+1\}}.
\]
Thus $\Phi^{(r)}_{m,k,\Xi}=\sum_g\Phi^{(r)}_{m,k,\Xi,g}$.  For any other
integer vector $g$ (wrong length, a negative entry, or
$\sum_tg_t\ne N-\kappa$), we set $\Phi^{(r)}_{m,k,\Xi,g}:=0$.
\end{definition}

The next lemma converts $\dinv$ and all validity constraints into a
word--local \emph{dictionary}.  For a row $x$ we write ``before $x$''
for rows at smaller positions and ``pred'' for the row immediately
preceding $x$ in the word.

\begin{lemma}[Dictionary]\label{lem:dict}
Let $O$ be an object of the class $(m,k,\Xi)$,
$\sortw(m)\in\{0^n,0^{n-1}1\}$.  Then
\begin{equation}\label{eq:dict}
\dinv(O)\;=\;D(\Xi)\;-\;k\;+\;\operatorname{brk}(O),
\qquad
\operatorname{brk}(O):=\sum_{x\ \mathrm{row\ of}\ O}\rec(x),
\end{equation}
where the receiver weights $\rec(x)$ are as follows.  (Here and in
(V1)--(V5), ``active'' and ``wall'' refer to rows on diagonal $0$
unless $W^*$ or $P$ is named explicitly: $W^*$, on diagonal $1$, is
never counted among the $L$-- or $H$--walls below, and in Case~B the
row $P$, though active, enters only through the terms naming it.
Throughout, $[\mathcal S]$ denotes the Iverson bracket: $1$ if the
statement $\mathcal S$ holds and $0$ otherwise.)
\begin{itemize}
\item active row $x$ on diagonal $0$ of color $c$:
$\rec(x)=\#\{\text{undecorated }L\text{--walls before }x\}
+[c{=}2]\cdot\#\{\text{undecorated color--}1\text{ actives before }x\}
+[\,W^*\text{ before }x\ \wedge\ \tau^*{=}H\,]
+[\,c{=}1\ \wedge\ P\text{ before }x\ \wedge\ P\text{ has color }2\,]$;
\item wall $t$ on diagonal $0$:
$\rec=[\ell_t{>}r{+}1]\cdot\#\{\text{undecorated actives before}\}
+[\ell_t{<}r]\cdot[\,P\text{ before}\,]$;
\item $W^*$ and $P$: $\rec=0$.
\end{itemize}
Moreover the valid objects are exactly the tuples
(interleaving, colors, decorations, $P$--position) satisfying:
\begin{itemize}
\item[(V1)] (rise at $W^*$, Case A) pred of $W^*$ exists and has
label $<\ell^*$: if pred is active this holds iff $\tau^*=H$
(irrespective of color and decoration); if pred is wall $t^*-1$ it
holds iff $\ell_{t^*-1}<\ell^*$;
\item[(V2)] (rise at $P$, Case B) pred of $P$ is an $L$--wall (then
both colors of $P$ are allowed) or an active of color $1$ (then $P$
has color $2$); pred a high wall, an active of color $2$, or the
start of the word is forbidden.  $P$ is never a valley, hence never
decorated.
\item[(V3)] (decorated active $x$) pred of $x$ is an $L$--wall (any
flag), $W^*$, or $P$ (drop or low flat predecessor), or pred is an
active of color $1$ and $x$ has color $2$;
\item[(V4)] (decorated wall $t$, $d_t=0$) pred is: an active row
(possible only if $\ell_t>r+1$), or $W^*$ or $P$ (drop), or wall
$t-1$ with $\ell_{t-1}<\ell_t$;
\item[(V5)] a flag $\varepsilon=1$ on the entry $W^*$ makes the
class empty ($W^*$ terminates a rise, hence is never a valley); the
first row of the word is never a valley.
\end{itemize}
\end{lemma}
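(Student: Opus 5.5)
The plan is a direct case analysis on ordered pairs of rows $(i,j)$, $i<j$, of $O$, organized by the emitter $i$ and receiver $j$. By Definition~\ref{def:objects}, a pair contributes to $\dinv$ iff it is an attack and $i\notin S$, and then we subtract $|S|=k$. Since $\sortw(m)\in\{0^n,0^{n-1}1\}$, only two kinds of attack occur. Primary attacks are pairs on diagonal $0$ with $w_i<w_j$. Secondary attacks must have $a_i=1$ and $a_j=0$, so their emitter is $W^*$ or $P$. Nothing attacks a diagonal-one row $x$: a primary attack would need a second diagonal-one row, and a secondary one would need $a_i=2$. This gives $\rec(W^*)=\rec(P)=0$.

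We then sort the undecorated-emitter attacks by type. Attacks with both rows walls (including $W^*$ as emitter) are counted by $D(\Xi)$, by Remark~\ref{rem:D}. For the remaining attacks we compare labels, using that active labels lie in $\{r,r+1\}$ and walls are low ($<r$) or high ($>r+1$):
\begin{itemize}
\item An undecorated $L$-wall before an active row always attacks it (primary).
\item An undecorated active row before an $H$-wall always attacks it.
\item An undecorated active of color $1$ before one of color $2$ attacks it; equal colors give no attack.
\item Active-before-$L$-wall and $H$-wall-before-active pairs never attack.
\item $W^*$ emits a secondary attack to a later diagonal-zero row $x$ iff $\ell^*>w_x$. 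For an active $x$ this holds iff $\tau^*=H$.
\item $W^*$ is undecorated by (V5). For later walls its attacks are already inside $D(\Xi)$, which is a function of $\Xi$ alone.
\item $P$, always undecorated, emits to a later active $x$ iff $w_P>w_x$, i.e.\ $P$ has color $2$ and $x$ color $1$.
\item $P$ emits to a later wall iff that wall is low.
\end{itemize}
Summing over receivers reproduces exactly the listed $\rec$ weights, which yields \eqref{eq:dict}.

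For validity, we translate each constraint locally. The rise condition applies only at positions where the diagonal climbs, and in these multisets that happens only entering $W^*$ or $P$ from a diagonal-zero predecessor. In particular neither can be the first row, since $a_1=0$. Comparing labels by type gives (V1) and (V2): for an active predecessor of $W^*$ the condition $w<\ell^*$ is independent of color iff $\tau^*=H$. Decorations require contractible valleys. A drop occurs precisely after $W^*$ or $P$. A flat valley needs a diagonal-zero predecessor with smaller label, and the label-type comparison yields (V3) and (V4). $W^*$ and $P$ terminate rises, so they are not valleys, giving (V5) and the non-decoration of $P$.

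Conversely, any tuple satisfying (V1)--(V5) defines an area word whose rises are only at $W^*$ or $P$ and satisfy the rise condition, and whose decorations lie on valleys. So the correspondence is a bijection.

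I expect the main obstacle to be bookkeeping rather than ideas. Two points need care. First, the wall-wall contributions, including those from $W^*$ and those involving decorated walls, must be exactly the constant $D(\Xi)$. Second, a decorated predecessor must not affect validity, since the valley conditions depend only on labels and diagonals. Both points I would check type by type against Remark~\ref{rem:D}.
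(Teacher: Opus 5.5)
Your proposal is correct and follows the paper's proof essentially step for step: write $\dinv=|A^S|-k$, isolate the outside--outside pairs as $D(\Xi)$ (including $W^*$'s attacks on walls), tabulate the remaining undecorated-emitter attacks by emitter/receiver type and charge them to the receiver, then read (V1)--(V5) off the rise and contractible-valley conditions by comparing labels in the order $\ell<r<r+1<\ell'$. One phrasing slip: for an active predecessor of $W^*$ the condition $w<\ell^*$ is color-independent in \emph{both} cases, and it \emph{holds} exactly when $\tau^*=H$; your wording ``is independent of color iff $\tau^*=H$'' says something different and should be corrected.
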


\begin{proof}
By definition $\dinv=|A^S|-k$ with $A^S=\{(i,j)\in A:i\notin S\}$.
Splitting $|A^S|$ according to whether both rows are outside or not
gives the constant $D(\Xi)$ of Remark~\ref{rem:D} plus the sum, over
all rows $x$, of the number of attacks received by $x$ from earlier
rows with unscreened emitter, at least one of the two rows being
active; it remains to enumerate the attack conditions for each
ordered pair of row types, recording them at the receiver.  Recall
that active labels are $r,r+1$ and outside labels avoid both.

\emph{Active $\to$ active, both on diagonal $0$.}  Primary, and
$w_i<w_j$ iff $(c_i,c_j)=(1,2)$.  Counted iff the emitter is
undecorated: the second active receiver term.

\emph{$L$--wall $\to$ active and $H$--wall $\to$ active (diagonal
$0$).}  Primary; $\ell<w_j$ holds always for $\ell<r$ and never for
$\ell>r+1$.  Counted iff the wall is undecorated: the first term.

\emph{Active $\to$ wall (diagonal $0$).}  Primary; $w_i<\ell_t$ holds
iff $\ell_t>r+1$.  Counted iff the active is undecorated: the
$H$--wall receiver term.  (For $\ell_t<r$ there is no attack.)

\emph{$W^*$ versus diagonal--$0$ rows (Case A).}  If $W^*$ is
earlier: a secondary attack $(W^*,j)$ requires $\ell^*>w_j$, i.e.\
$\tau^*=H$ when $j$ is active (the third active receiver term; $W^*$
is never decorated by (V5), so the attack is always counted) and
$\ell^*>\ell_t$ when $j$ is a wall (an outside--outside pair, inside
$D(\Xi)$).  If $W^*$ is later: a primary attack needs equal
diagonals and a secondary one needs $a_i=2$; both are impossible, so
$W^*$ receives nothing.

\emph{$P$ versus diagonal--$0$ rows (Case B).}  If $P$ is earlier:
the secondary condition $w_P>w_j$ holds for an active $j$ iff
$(c_P,c_j)=(2,1)$ (the fourth active receiver term; $P$ is never
decorated) and for a wall iff $\ell_t<r$ (the $L$--wall receiver
term; both colors of $P$ exceed every low label, so the term is
color--blind).  If $P$ is later, both attack types are impossible as
for $W^*$, so $P$ receives nothing.

Summing the receiver weights over all rows reproduces
$|A^S|-D(\Xi)$, proving~\eqref{eq:dict}.

For validity: the rise condition occurs exactly at $W^*$ (Case~A) or
at $P$ (Case~B), giving (V1) and (V2) by inspection of the label
order $\ell<r<r+1<\ell'$ for $L$-- and $H$--labels $\ell,\ell'$; the
forced color in (V2) is because $w_{\mathrm{pred}}<w_P$ with both
labels in $\{r,r+1\}$ forces $(r,r+1)$.  A decorated row must be a
contractible valley; applying the valley definition to each
predecessor type yields (V3)--(V4) by the same label--order
inspection (a drop predecessor, i.e.\ $W^*$ or $P$, always
validates; a flat predecessor validates iff its label is smaller).
Finally $W^*$ and $P$ terminate the unique rise, so neither is a
valley, and the first row has no predecessor: (V5).
\end{proof}

Two consequences of Lemma~\ref{lem:dict} are used constantly.
First, \emph{color--blindness} of validity outside the
active--active and $P$ interactions: every condition in (V1)--(V5)
involving an active predecessor either holds for both colors or for
neither, except the interior decoration rule ``pred of color $1$,
self of color $2$'' in (V3) and the rule (V2) at $P$.  Second, the
receiver weight of a row never depends on the decoration status of
the row itself, only on the decoration of \emph{earlier} rows:
screening is an emitter--side phenomenon.

We close this section with the example showing that the refinement
of Definition~\ref{def:refined} is the correct one.

\begin{example}[sharpness of the refinement]\label{ex:sharp}
Take $n=4$, $m=0^4$, $r=1$, $k=1$, and
$\Xi=\bigl((0,3,0)\bigr)$: one undecorated high wall, labelled $3$,
so the active letters are $1,2$ and the single decoration lies on an
active valley.  Refining by the \emph{total} number of active rows in
each gap (equivalently, by the absolute position of the wall) produces asymmetric pieces: writing $[\,u^\alpha v^\beta\,]\Phi$ for
coefficients,
\[
\begin{array}{c@{\qquad}c@{\qquad}c}
\toprule
\text{totals }(t_0,t_1) & [\,u^2v\,]\Phi & [\,uv^2\,]\Phi\\
\midrule
(0,3) & 1+q & 1+q\\
(1,2) & q^{2} & q\\
(2,1) & q & q^{2}\\
(3,0) & q^{2}+q^{3} & q^{2}+q^{3}\\
\bottomrule
\end{array}
\qquad\quad
\begin{array}{c@{\qquad}c@{\qquad}c}
\toprule
\text{undec.\ }(g_0,g_1) & [\,u^2v\,]\Phi & [\,uv^2\,]\Phi\\
\midrule
(0,2) & 1+q & 1+q\\
(1,1) & q+q^{2} & q+q^{2}\\
(2,0) & q^{2}+q^{3} & q^{2}+q^{3}\\
\bottomrule
\end{array}
\]
The totals $(1,2)$ and $(2,1)$ are individually asymmetric; their
union is the refined class $g=(1,1)$ of
Definition~\ref{def:refined}, which is symmetric, in accordance with
Theorem~\ref{thm:main}.  (The decorated active migrates across the
wall between the two total--vectors while the undecorated counts stay
fixed.)  The values also illustrate the normal form of
Proposition~\ref{prop:flatA}: here $\Phi_{g}=
q^{D(\Xi)-k}\,q^{g_0}\cdot(q+q^{2})(u^{2}v+uv^{2})
= q^{g_0-1}(q+q^2)(u^2v+uv^2)$, since the $z$--linear part of
$T^{2}\onev$ equals $(q+q^{2})z\,(u^{2}v+uv^{2})$.
\end{example}

\section{The transfer algebra and the fundamental
identity}\label{sec:algebra}

We work in the ring $R:=\mathbb{Z}[q,q^{-1}][u,v,z]$, and
$\onev$ denotes the constant polynomial $1\in R$.  For a polynomial
$F\in R$ let $M_F$ denote the operator of multiplication by $F$; in
particular $\Mu,\Mv$ are the multiplications by $u$ and by $v$.  Let
$\sigma_u,\sigma_v$ be the substitutions
$u\mapsto qu$, $v\mapsto qv$, and set
\[
\dd:=(1+qzv)\,\sigma_v,\qquad
\ddb:=(1+qzu)\,\sigma_u,\qquad
Q:=\sigma_u\sigma_v,\qquad
\LL:=\dd\,\ddb,
\]
\[
T:=u\,\dd+v
\;=\;\Mu\dd+\Mv,
\qquad
\bar T:=v\,\ddb+u,
\qquad
T^{\langle\lambda\rangle}:=\lambda\, u\,\dd+v.
\]
We also write $J:=(1+zu)(1+zv)$ and
$J^{(q)}:=(1+qzu)(1+qzv)$, so that $\LL=J^{(q)}Q$ as operators (the
substitutions in $\dd\ddb$ first commute past the prefactors:
$\sigma_v(1+qzu)=(1+qzu)\sigma_v$ and
$(1+qzv)\sigma_v\,(1+qzu)\sigma_u = (1+qzv)(1+qzu)\sigma_v\sigma_u$).
We use the standard $q$--Pochhammer symbol
$(A;q)_a:=\prod_{i=0}^{a-1}(1-Aq^{i})$, so that
$(-qzv;q)_a=\prod_{i=1}^{a}(1+q^{i}zv)$, and the Gaussian binomial
$\qbin{m}{a}:=(q;q)_m/\bigl((q;q)_a\,(q;q)_{m-a}\bigr)$ for
$0\le a\le m$, set to $0$ otherwise.
Finally $\swap$ denotes the involution of $R$ exchanging
$u\leftrightarrow v$, and
$\sym:=\{F\in R:\swap F=F\}$.  Both gradings of $R$ (total degree in
$u,v$, and degree in $z$) are preserved by $\swap$, so every
bigraded component of an element of $\sym$ lies in $\sym$; we use
this silently when extracting coefficients.

\begin{lemma}[commutation relations]\label{lem:comm}
As operators on $R$:
\begin{enumerate}
\item $\dd\ddb=\ddb\dd$, and $\swap\,\dd=\ddb\,\swap$,
$\swap\,Q=Q\,\swap$, $\swap\,\LL=\LL\,\swap$,
$\swap\,T=\bar T\,\swap$;
\item $\dd \Mu=\Mu\dd$,\quad $\dd \Mv=q\,\Mv\dd$,\quad
$\ddb \Mv=\Mv\ddb$,\quad $\ddb \Mu=q\,\Mu\ddb$;
\item $(\Mu\dd)\,\Mv \;=\; q\,\Mv\,(\Mu\dd)$, hence the
$q$--binomial theorem
\begin{equation}\label{eq:qbinT}
T^{m}\;=\;\sum_{a=0}^{m}\qbin{m}{a}\,u^{a}v^{m-a}\,\dd^{a},
\qquad
\dd^{a}\onev=\prod_{i=1}^{a}(1+q^{i}zv)=(-qzv;q)_a;
\end{equation}
\item $\LL \Mu = q\,\Mu \LL$ and $\LL \Mv=q\,\Mv\LL$; consequently
$\LL T = q\, T\LL$ and, for all $a,k\ge 0$,
\begin{equation}\label{eq:Lpull}
T^{a}\,\LL^{k}\;=\;q^{-ak}\,\LL^{k}\,T^{a};
\end{equation}
\item $T\,\ddb=\ddb\,T^{\langle 1/q\rangle}$ and
$\ddb\,T=T^{\langle q\rangle}\ddb$;
\item $T(\Mv f)=\Mv\,T^{\langle q\rangle}f$ and hence
$T^{g}(\Mv f)=\Mv (T^{\langle q\rangle})^{g}f$; also
$T\Mu=\Mu T$;
\item $\LL$ preserves $\sym$, and more generally
$\swap\bigl(\LL^{k}F\bigr)=\LL^{k}\,\swap F$.
\end{enumerate}
\end{lemma}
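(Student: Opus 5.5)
The whole lemma reduces to checking identities of operators on the monomials $u^\alpha v^\beta z^\gamma$, which span $R$ over $\mathbb{Z}[q,q^{-1}]$. Every operator involved is a composite of multiplication operators $M_F$ and the two ring automorphisms $\sigma_u,\sigma_v$. So the basic tool is the twisting rule
\[
\sigma\,M_F=M_{\sigma F}\,\sigma ,\qquad \sigma\in\{\sigma_u,\sigma_v\},
\]
together with the facts that $\sigma_u$ and $\sigma_v$ commute, that each fixes the other's variable and $z$, and that $\swap\,\sigma_v=\sigma_u\,\swap$. I would prove the items in order, each reduced to this rule.

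For (1), commutativity follows by moving the substitutions past the prefactors, as in the computation $\LL=J^{(q)}Q$ already displayed: both $\dd\ddb$ and $\ddb\dd$ equal $(1+qzu)(1+qzv)\sigma_u\sigma_v$. The relation $\swap\,\dd=\ddb\,\swap$ is immediate from $\swap(1+qzv)=1+qzu$ and $\swap\sigma_v=\sigma_u\swap$. The relations for $Q$, $\LL$ and $T$ then follow, using $\swap M_u=M_v\swap$.

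For (2), the rule gives $\sigma_v M_v=qM_v\sigma_v$ and $\sigma_v M_u=M_u\sigma_v$, and the prefactors commute with multiplications. Item (3) is then $M_u\dd M_v=qM_uM_v\dd=qM_v(M_u\dd)$. Since $T=X+Y$ with $X=M_u\dd$, $Y=M_v$ and $XY=qYX$, the standard $q$-binomial theorem gives
\[
T^m=\sum_a\qbin{m}{a}Y^{m-a}X^a .
\]
Moreover $X^a=u^a\dd^a$ because $\dd$ commutes with $M_u$, which yields the stated normal ordering. The value of $\dd^a\onev$ follows by induction: $\dd(F)=(1+qzv)F(u,qv,z)$, so applying $\dd$ to $(-qzv;q)_{a-1}$ shifts each factor $1+q^izv$ to $1+q^{i+1}zv$ and prepends $1+qzv$.

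For (4), use $\LL=J^{(q)}Q$ and $QM_u=qM_uQ$ to get $\LL M_u=q\,M_u\LL$, and similarly for $v$. Since $\LL$ commutes with $\dd$ by (1), $\LL T=\LL(M_u\dd+M_v)=q(M_u\dd+M_v)\LL=qT\LL$. Then \eqref{eq:Lpull} follows by a double induction on $a$ and $k$. For (5), $\ddb$ commutes with $\dd$ and with $M_v$ but scales $M_u$ by $q$. Hence $\ddb T=(qM_u\dd+M_v)\ddb=T^{\langle q\rangle}\ddb$, and conjugating gives $T\ddb=\ddb T^{\langle 1/q\rangle}$.

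For (6), $T M_v=M_u\dd M_v+M_vM_v=M_v(qM_u\dd+M_v)=M_vT^{\langle q\rangle}$, and iterating gives the $g$-th power. The identity $TM_u=M_uT$ holds because $\dd$ commutes with $M_u$. Item (7) follows from (1) by induction on $k$.

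There is no genuine obstacle here; the lemma is bookkeeping. The only place requiring care is the normal ordering in (3). One must check that the $q$-binomial theorem for $XY=qYX$ produces $Y^{m-a}X^a$ with coefficient $\qbin{m}{a}$, and not its $q\mapsto q^{-1}$ version, given which factor is placed on the left. I would confirm this on $m=2$, where $T^2=X^2+(1+q)YX+Y^2$, matching $\qbin{2}{1}=1+q$ with $v$ to the left of $u\dd$.
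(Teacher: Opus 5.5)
Your proposal is correct and matches the paper's proof essentially step for step: every item is a direct commutation computation from $\sigma M_F=M_{\sigma F}\sigma$, with the same $q$-binomial normal ordering in (3) and (7) obtained from $\swap\,\LL=\LL\,\swap$. The only wording to tighten is ``conjugating'' in (5). $\ddb$ is not invertible on $R$, since $1+qzu$ is not a unit. Instead, rerun your computation on $T^{\langle 1/q\rangle}=q^{-1}\Mu\dd+\Mv$, which gives $\ddb\,T^{\langle 1/q\rangle}=T\,\ddb$ directly.
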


\begin{proof}
(1) The two factors of $\dd$ (resp.\ $\ddb$) involve only $v$
(resp.\ $u$), so $\dd$ and $\ddb$ commute; conjugation by $\swap$
exchanges the two by symmetry of the definitions, and $Q$, $\LL$,
$T\leftrightarrow\bar T$ follow.
(2) $\dd(uf)=(1+qzv)\sigma_v(uf)=u\,\dd f$ and
$\dd(vf)=(1+qzv)(qv)\sigma_vf=qv\,\dd f$; the $\ddb$ relations are
mirror images.
(3) $(\Mu\dd)(\Mv f)=u\,\dd(vf)=quv\,\dd f=q\Mv(\Mu\dd)f$ by (2).
Set $A=\Mu\dd$ and $B=\Mv$.  Since $AB=qBA$, the Gaussian binomial
theorem in normal order gives
$(A+B)^m=\sum_{a=0}^m\qbin{m}{a}B^{m-a}A^a$.  Now
$A^{a}=(\Mu\dd)^{a}=u^{a}\dd^{a}$ by the first relation in (2), so
$B^{m-a}A^a=v^{m-a}u^{a}\dd^a=u^av^{m-a}\dd^a$.  This is
\eqref{eq:qbinT}.  Finally
$\dd^a\onev=\prod_{i=1}^a(1+q^izv)$ since each application of $\dd$
scales the $v$'s of the current prefactor by $q$ and appends one
factor $(1+qzv)$.
(4) $\LL(uf)=\dd\ddb(uf)=\dd(qu\,\ddb f)=qu\,\LL f$ by (2), and
symmetrically for $v$; then
$\LL T=\LL\Mu\dd+\LL\Mv=q\Mu\dd\LL+q\Mv\LL=qT\LL$ (using
$\LL\dd=\dd\LL$), and \eqref{eq:Lpull} follows by iteration.
(5) $T\ddb=\Mu\dd\ddb+\Mv\ddb$ and
$\ddb T^{\langle 1/q\rangle}=\ddb(q^{-1}\Mu\dd+\Mv)
=q^{-1}(q\Mu\ddb)\dd+\Mv\ddb=\Mu\dd\ddb+\Mv\ddb$ by (2) and (1);
the second relation is the same computation read backwards.
(6) $T(vf)=u\dd(vf)+v^2f=u(qv)\dd f+v\cdot vf
=v\,(qu\dd+v)f=v\,T^{\langle q\rangle}f$, and $T\Mu=\Mu T$ by (2).
(7) $\LL=J^{(q)}Q$ with $J^{(q)}\in\sym$ a symmetric multiplier and
$Q$ commuting with $\swap$; hence $\swap\LL=\LL\swap$ by (1).
\end{proof}

\begin{theorem}[Identity (I)]\label{thm:identityI}
For every $m\ge 0$,
\[
T^{m}\onev\;=\;\sum_{a=0}^{m}\qbin{m}{a}\,u^{a}v^{m-a}\,(-qzv;q)_a
\;\in\;\sym.
\]
\end{theorem}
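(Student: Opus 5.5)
The plan is to read off the displayed formula directly from Lemma~\ref{lem:comm}(3), and then prove membership in $\sym$ by expanding in powers of $z$ and checking each $z$-homogeneous component separately. Since $\swap$ preserves the $z$-grading, this suffices.

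\textbf{Step 1: the formula.} Apply \eqref{eq:qbinT} to $\onev$ and use $\dd^{a}\onev=(-qzv;q)_a$, both from Lemma~\ref{lem:comm}(3). This gives the stated expansion
\[
T^{m}\onev=\sum_{a=0}^{m}\qbin{m}{a}u^{a}v^{m-a}(-qzv;q)_a .
\]

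\textbf{Step 2: expand the Pochhammer factor in $z$.} By the finite $q$-binomial theorem (Rothe's identity),
\[
(-qzv;q)_a=\prod_{i=1}^{a}(1+q^{i}zv)=\sum_{j=0}^{a}q^{\binom{j+1}{2}}\qbin{a}{j}(zv)^{j}.
\]
Hence the coefficient of $z^{j}$ in $T^{m}\onev$ is
\[
q^{\binom{j+1}{2}}\sum_{a=j}^{m}\qbin{m}{a}\qbin{a}{j}u^{a}v^{m-a+j}.
\]

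\textbf{Step 3: the key rewriting.} Use the $q$-analogue of the subset-of-a-subset identity,
\[
\qbin{m}{a}\qbin{a}{j}=\qbin{m}{j}\qbin{m-j}{a-j},
\]
which is immediate from the definition of the Gaussian binomial via $(q;q)$-factorials. Substituting $b=a-j$, the $z^{j}$-coefficient becomes
\[
q^{\binom{j+1}{2}}\qbin{m}{j}(uv)^{j}\sum_{b=0}^{m-j}\qbin{m-j}{b}u^{b}v^{m-j-b}.
\]

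\textbf{Step 4: symmetry of each component.} The prefactor $(uv)^j$ is $\swap$-invariant. The inner sum is $\swap$-invariant by the reflection symmetry $\qbin{n}{b}=\qbin{n}{n-b}$: applying $\swap$ sends the $b$-th term to one that matches the $(n-b)$-th term. Therefore every $z$-component lies in $\sym$, and so does $T^{m}\onev$.

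I expect no real obstacle. The only point needing care is Step 3, where one must recognize that the factor $v^{j}$ coming from the decorations combines with $u^{a}$ into $(uv)^{j}$. That is exactly the point of the reindexing $b=a-j$.
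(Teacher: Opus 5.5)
Your proposal is correct and follows the paper's proof essentially step for step. You read off the formula from \eqref{eq:qbinT}, extract the $z^{j}$--coefficient via the $q$--binomial theorem, and conclude with the $q$--subset--of--subset identity together with $\qbin{n}{b}=\qbin{n}{n-b}$. The only cosmetic difference is that you factor the component as $q^{\binom{j+1}{2}}\qbin{m}{j}(uv)^{j}$ times a manifestly symmetric Gaussian--binomial polynomial, whereas the paper matches the coefficients of $u^{a}v^{m-a+\kappa}$ and $u^{m+\kappa-a}v^{a}$ directly.
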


\begin{proof}
The displayed formula is~\eqref{eq:qbinT} applied to $\onev$.  Using
$[z^{\kappa}](-qzv;q)_a
= e_{\kappa}(q,q^2,\dots,q^{a})\,v^{\kappa}
= q^{\binom{\kappa+1}{2}}\qbin{a}{\kappa}v^{\kappa}$,
the coefficient of $z^{\kappa}$ in $T^m\onev$ equals
\[
q^{\binom{\kappa+1}{2}}\sum_{a=\kappa}^{m}
\qbin{m}{a}\qbin{a}{\kappa}\;u^{a}\,v^{\,m-a+\kappa}.
\]
Symmetry in $(u,v)$ of this polynomial amounts to the identity
\begin{equation}\label{eq:subsub}
\qbin{m}{a}\qbin{a}{\kappa}
\;=\;
\qbin{m}{\,m+\kappa-a\,}\qbin{\,m+\kappa-a\,}{\kappa},
\qquad \kappa\le a\le m,
\end{equation}
the right side being the coefficient of
$u^{\,m+\kappa-a}v^{a}$ (note $\kappa\le m+\kappa-a\le m$ exactly
when $\kappa\le a\le m$, so the ranges match).  By the
$q$--subset--of--subset identity
$\qbin{m}{b}\qbin{b}{\kappa}=\qbin{m}{\kappa}\qbin{m-\kappa}{b-\kappa}$,
the left side of~\eqref{eq:subsub} equals
$\qbin{m}{\kappa}\qbin{m-\kappa}{a-\kappa}$ and the right side equals
$\qbin{m}{\kappa}\qbin{m-\kappa}{m-a}$; these agree by the symmetry
$\qbin{n}{j}=\qbin{n}{n-j}$ of Gaussian binomials, since
$(a-\kappa)+(m-a)=m-\kappa$.
\end{proof}

\begin{corollary}\label{cor:TbarT}
For all $N\ge 0$, $\ \bar T\,T^{N}\onev=T^{N+1}\onev$.
\end{corollary}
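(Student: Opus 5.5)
The corollary should follow from Lemma~\ref{lem:comm}(1) together with Theorem~\ref{thm:identityI}, without further computation. The idea is to transport the action of $\bar T$ through the swap $\swap$ and then use the symmetry of $T^{N}\onev$.

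The steps are as follows.

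\textbf{Step 1.} By Lemma~\ref{lem:comm}(1) we have $\swap\,T=\bar T\,\swap$. Since $\swap$ is an involution, this gives $\bar T=\swap\,T\,\swap$ as operators on $R$.

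\textbf{Step 2.} By Theorem~\ref{thm:identityI}, $T^{N}\onev\in\sym$, so $\swap\,T^{N}\onev=T^{N}\onev$. Combining with Step~1,
\[
\bar T\,T^{N}\onev=\swap\,T\,\swap\,T^{N}\onev=\swap\,T\,T^{N}\onev=\swap\,T^{N+1}\onev .
\]

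\textbf{Step 3.} Applying Theorem~\ref{thm:identityI} once more, now with exponent $N+1$, we get $T^{N+1}\onev\in\sym$. Hence $\swap\,T^{N+1}\onev=T^{N+1}\onev$, which is the claim.

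There is no real obstacle. The only point to check is that the intertwining $\swap T=\bar T\swap$ holds as an identity of operators on all of $R$, not merely on $\sym$. It does: conjugating by $\swap$ sends $\Mu\mapsto\Mv$ and $\sigma_v\mapsto\sigma_u$, hence $\dd\mapsto\ddb$, so $\swap T\swap=\Mv\ddb+\Mu=\bar T$. This is exactly what Lemma~\ref{lem:comm}(1) records.

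As a sanity check one could also verify the identity directly for small $N$. For $N=0$: $\bar T\onev=v(1+qzu)+u$ and $T\onev=u(1+qzv)+v$, and these are equal. The symmetry argument above, however, handles all $N$ uniformly.
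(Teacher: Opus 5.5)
Your proposal is correct and follows essentially the same route as the paper: intertwine with $\swap$ via Lemma~\ref{lem:comm}(1), then use the symmetry of $T^{N}\onev$ and $T^{N+1}\onev$ from Theorem~\ref{thm:identityI}. Your check that $\swap\,\dd\,\swap=\ddb$ holds on all of $R$, and your $N=0$ computation giving $u+v+qzuv$ on both sides, are both accurate.
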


\begin{proof}
By Lemma~\ref{lem:comm}(1), $\bar T\swap=\swap T$.  Applying both
sides to $T^N\onev\in\sym$ (Theorem~\ref{thm:identityI}) gives
$\bar T\,T^N\onev=\bar T\,\swap(T^N\onev)
=\swap\bigl(T^{N+1}\onev\bigr)=T^{N+1}\onev$, the last step again by
Theorem~\ref{thm:identityI}.
\end{proof}
\section{Walls, gates, and the assembly in the flat and outside
cases}\label{sec:walls}

We now express each refined class generating function as an explicit
word in the algebra of Section~\ref{sec:algebra} applied to $\onev$.
The mechanism is an emitter--side reattribution of the dictionary: in
$\operatorname{brk}=\sum_x\rec(x)$, each unscreened attack
contributing to $\rec(x)$ is charged instead to its \emph{emitter},
which implements it as a substitution applied to the part of the word
to the right of the emitter.  Concretely, in a product
$\mathcal{O}_{x_1}\mathcal{O}_{x_2}\cdots\mathcal{O}_{x_n}\onev$
(letters of the object read left to right, composition acting on the
right), a substitution $\sigma_v$ performed by the operator of letter
$x_i$ multiplies by $q$ the $v$--variable of \emph{every} letter
$x_j$, $j>i$, decorated or not, which is exactly the statement
``$x_i$ attacks every later row of color $2$''; similarly
$\sigma_u$, and $Q=\sigma_u\sigma_v$ for ``attacks every later
active row''.  Attacks received by later \emph{walls} carry no
variable and are charged to the emitter as explicit scalars.  In
Case~B the diagonal--one row $P$ is exempt throughout: no
diagonal--$0$ row attacks $P$ (Lemma~\ref{lem:dict}), and $P$'s
content is carried by markers transparent to every substitution
(Section~\ref{sec:caseB}), so ``active'' in the emitter statements
of this section refers to the diagonal--$0$ actives.

We mark an undecorated active row of color $1$ by $u$, of color $2$
by $v$, and decorated actives by $zu,zv$.  For a gap $t$ let
\[
h_t:=\#\{t'>t:\ \ell_{t'}>r+1,\ d_{t'}=0\},
\qquad
\Lambda_t:=\#\{t'>t:\ \ell_{t'}<r,\ d_{t'}=0\}
\]
count the high, respectively low, diagonal--$0$ walls to the right of
gap $t$, decorated or not, since by Lemma~\ref{lem:dict} the
receiver's decoration is irrelevant.

\begin{lemma}[head clusters]\label{lem:cluster}
Immediately after a row whose successor may be decorated (an
$L$--wall of either flag, $W^*$, or, in Case~B, the row $P$), the
maximal initial run of decorated actives (the \emph{head cluster};
for $P$, its \emph{dress}) is one of
$\varnothing,\{1^*\},\{2^*\},\{1^*2^*\}$, with content generating
function $J=(1+zu)(1+zv)$ and no internal attack weights (decorated
rows emit nothing).  After a high wall, or at the start of the word,
no decorated active may appear before the first undecorated one.
\end{lemma}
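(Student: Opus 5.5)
The plan is to read the lemma directly off the validity rules (V3)--(V5) of Lemma~\ref{lem:dict}, applied row by row along the maximal run of decorated actives following the given row $y$. We write the run as $x_1x_2\cdots x_j$. Here $y$ is an $L$--wall, $W^*$, or $P$.

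\emph{Step 1: possible run contents.} The row $x_1$ is always a valid decoration, since its pred is $y$, which is of an allowed type in (V3). For $i\ge2$, the pred of $x_i$ is the decorated active $x_{i-1}$. By (V3) the only admissible active predecessor is one of color $1$, with $x_i$ of color $2$. Hence $x_{i-1}$ has color $1$ and $x_i$ has color $2$. A third element $x_3$ would need $x_2$ of color $1$, contradicting $x_2$ having color $2$. So $j\le2$, and if $j=2$ the run is $1^*2^*$.

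Conversely, each of the four runs $\varnothing$, $\{1^*\}$, $\{2^*\}$, $\{1^*2^*\}$ is valid. Maximality is automatic, because the run ends at the first row that is not a decorated active.

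\emph{Step 2: generating function.} With the markers $zu$ and $zv$ for decorated actives, the four contents contribute $1$, $zu$, $zv$, $z^2uv$. Their sum is $(1+zu)(1+zv)=J$.

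\emph{Step 3: no internal weights.} The only possible internal attack is $x_1\to x_2$, which is a color $1\to 2$ active pair. By the dictionary, this attack is counted only when the emitter is undecorated, and $x_1$ is decorated. So the only attack between the two rows is screened, and decorated rows emit nothing. The weights that the cluster rows receive from earlier rows and walls are not part of the cluster content; they are handled by the surrounding operator reattribution.

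\emph{Step 4: after a high wall or at the start.} Suppose $y$ is an $H$--wall. A decorated active directly after it has an active-free predecessor that is neither an $L$--wall, $W^*$, nor $P$, so (V3) forbids it. Any later decorated active before the first undecorated one would have a decorated active as pred. Repeating Step 1's argument, the first decorated active in such a run already has a forbidden pred, so none occurs. At the start of the word, (V5) says the first row is never a valley, and the same induction applies.

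The main point needing care is that (V3)'s ``pred is an active of color $1$'' places no condition on that pred's decoration status. This permits the pattern $1^*2^*$ but nothing longer. I also have to check that $W^*$ or $P$ followed by an $L$--wall does not interfere, but the run is by definition composed only of actives.
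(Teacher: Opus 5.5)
Your proposal is correct and follows essentially the same route as the paper: both read the cluster structure directly off (V3), with a decorated active after an active allowed only as a color--$2$ row following a color--$1$ row, which excludes both a decorated row after a $2^*$ and two consecutive $1^*$'s, and both handle the high--wall and start--of--word cases by (V3) and (V5). Your explicit observations are also the paper's: the predecessor's decoration status does not affect the valley condition (which is what permits $1^*2^*$), and the single internal attack $1^*\to 2^*$ is screened.
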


\begin{proof}
By (V3), after an $L$--wall, after $W^*$, or after $P$ both $1^*$ and
$2^*$ are valid; a $2^*$ may also follow a decorated $1^*$, but no
decorated row may follow a $2^*$ (the label $r+1$ admits no ascent
within the active alphabet, and an active pred is never a drop).  Two
consecutive $1^*$'s are excluded since $(r,r)$ is not an ascent.
After a high wall, or with no pred at all, (V3) and (V5) exclude
every decorated active.
\end{proof}

\begin{lemma}[gap steps]\label{lem:gapsteps}
Cut the sequence of active rows of a gap $t$ immediately before
each undecorated active; in Case~B, first remove the row $P$, its
dress (Lemma~\ref{lem:cluster}), and the active immediately
preceding $P$ whenever $P$'s pred is an active (necessarily an
undecorated $1$ or a decorated $1^*$, by Lemma~\ref{lem:dict});
together these removed rows form the \emph{active part} of the
$P$--block (Section~\ref{sec:caseB}).  This is the only
$P$--related material removed in this lemma: host--wall slots and any
glue chain whose demand is satisfied by $P$ are treated later in
Lemma~\ref{lem:Pblocks}.  Apart from a possible
initial segment of decorated rows (the head cluster of
Lemma~\ref{lem:cluster}, treated with its host), each segment, called a \emph{compound step}, is of exactly one of three kinds:
a single undecorated $1$; a single undecorated $2$; or an undecorated
$1$ immediately followed by a decorated $2$.  The operator of one
compound step is $q^{h_t}\,T$, the three kinds being the
three terms of
$q^{h_t}T=q^{h_t}\bigl(u\,\sigma_v+qz\,uv\,\sigma_v+v\bigr)$.
\end{lemma}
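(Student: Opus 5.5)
The proof has two parts: a combinatorial classification of the segments, and a check that the emitter-side reattribution of the dictionary weights (Lemma~\ref{lem:dict}) turns each segment into the corresponding term of $q^{h_t}T$.

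\textbf{Step 1: classify the segments.} After removing the active part of the $P$--block and the head cluster, every segment starts with an undecorated active, by construction of the cuts. Any further rows of the segment are decorated actives, since a new undecorated active would open a new segment. By (V3), a decorated active whose pred is an active row is valid only if the pred has color $1$ and it has color $2$; a decorated pred of color $1$ cannot occur, since then the pred itself would have to follow a $1$ or a host. So:
\begin{itemize}
\item after an undecorated $1$, at most one decorated $2^*$ can follow;
\item after an undecorated $2$, nothing decorated can follow;
\item after a $2^*$, nothing decorated can follow (Lemma~\ref{lem:cluster}).
\end{itemize}
This gives exactly the three kinds $1$, $2$ and $1\,2^*$. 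The removed $P$--material and the walls only delimit segments, so no other configuration arises inside a gap.

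\textbf{Step 2: match the weights.} I use emitter charging. An undecorated active of color $1$ attacks every later diagonal-$0$ active of color $2$, decorated or not (the second term of $\rec$). This is realized by $\sigma_v$ acting on everything to its right, together with the marker $u$. An undecorated $2$ emits no active--active attack and is marked by $v$ with no substitution. A decorated $2^*$ emits nothing, but it sits immediately right of its $1$, so it receives that $1$'s attack; this accounts for the factor $q$ in $qz\,uv\,\sigma_v$. In operator form, $u\,\sigma_v\,(zv\,\cdot)=qz\,uv\,\sigma_v$, i.e.\ the $2^*$ term is the $z$--linear part of $u\,\dd$. Consequently $u\,\dd+v=u\sigma_v+qzuv\sigma_v+v=T$.

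The remaining emitted weight of an undecorated active is its attack on every later high wall on diagonal $0$ (the $H$--wall receiver term). Those walls number exactly $h_t$, independently of the receivers' flags, which gives the scalar $q^{h_t}$. Decorated $2^*$ rows emit nothing toward walls, consistent with $q^{h_t}$ being attached once per step. Attacks from walls and from $W^*$ or $P$ are charged to those emitters elsewhere, so they do not enter the step operator.

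\textbf{Main obstacle.} The delicate point is the bookkeeping in Case~B. I must check that removing the pred of $P$ when it is active does not orphan a following $2^*$, which would break the trichotomy. If the removed pred is an undecorated $1$, then $P$ itself follows it, so no $2^*$ can be attached to that $1$. I would also check that the charging of attacks by $P$ (the fourth term of $\rec$) is deferred consistently to the $P$--block markers, which are transparent to $\sigma_u$ and $\sigma_v$.
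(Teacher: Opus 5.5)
Your proposal is correct and takes essentially the same route as the paper: you classify the segments by (V3), so that a decorated row inside a segment must be a $2$ whose pred is an undecorated $1$. You then obtain $q^{h_t}T$ by emitter charging, with $\sigma_v$ supplying the internal $q$ on the companion $zv$ and the later diagonal--$0$ high walls supplying $q^{h_t}$.

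One Case~B point needs to be stated, and your obstacle paragraph checks only the predecessor side. The row immediately after the removed $P$--block is undecorated, by maximality of $P$'s dress, so no decorated row gets spliced onto the segment before the block. This maximality, and not the construction of the cuts alone, is what guarantees that every non-first row of a segment has the preceding row of that segment as its pred.
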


\begin{proof}
Every row of a segment after its first is decorated, with pred the
preceding row of the same segment: the cuts precede the undecorated
rows, and in Case~B the row immediately following the removed
$P$--block is undecorated, by maximality of $P$'s dress
(Lemma~\ref{lem:cluster}).  By (V3) such
a decorated row has color $2$ with pred an active of color $1$.
That pred is undecorated: it is either the first row of its segment,
undecorated by construction, or itself a decorated row with an
active pred, which (V3) would color $2$, not $1$.  A second
decorated row in a segment is therefore impossible (its pred would
be the first decorated $2$, of the wrong color, and $(r+1,r+1)$ is
not an ascent).  This proves the segment structure.  For the
weights: an undecorated $1$ contributes content $u$, attacks every
later active of color $2$ (the substitution $\sigma_v$, which also
multiplies the $zv$ of its own companion decorated $2$, accounting
for the attack inside the compound and producing the factor $qzv$ in
the middle term) and attacks every later high diagonal--$0$ wall:
the scalar $q^{h_t}$.  An undecorated $2$ contributes $v$, attacks no
later active (a primary attack needs a strictly larger label at the
receiver), and attacks the later high walls: again $q^{h_t}$.  A
decorated active emits nothing.  Summing the three kinds gives
$q^{h_t}\bigl(u(1+qzv)\sigma_v+v\bigr)=q^{h_t}T$.
\end{proof}

In the next two lemmas, a \emph{gate} is a Boolean local admissibility
condition on the current expansion branch.  A failed gate contributes
zero.  A \emph{glue demand} is a local condition requiring the decorated
wall to use the row immediately to its left as predecessor; when the
wall immediately to the right makes such a demand, the head-cluster
multiplier $J$ of the row or wall to its left is suppressed as an entire
factor, because that cluster is forced to be empty.  These operations
partition the local expansion branch by branch; they are not additional
components of the fixed refined datum $g$.

\begin{lemma}[wall operators, gates, and gluing]\label{lem:walls}
In the operator word of a refined class, wall $t$ (together with
its head cluster, if any) contributes:
\begin{enumerate}
\item \emph{undecorated $L$--wall:} the operator
$Q\circ M_J=J^{(q)}Q=\LL$;
\item \emph{undecorated $H$--wall:} the identity (it emits nothing
to later actives and hosts no cluster; its receptions are the
$q^{h}$--scalars of Lemma~\ref{lem:gapsteps});
\item \emph{decorated $H$--wall:} the identity, subject to the
\emph{gate}: the refined tuple contributes only if
$g_{t-1}\ge 1$, or $t\ge 2$ and ($d_{t-1}=1$ or
$\ell_{t-1}<\ell_t$);
\item \emph{$W^*$ with $\tau^*=H$ (Case A):} the operator $\LL$
(recall $\varepsilon_{t^*}=0$: a decorated $W^*$ gives the empty
class, Lemma~\ref{lem:dict}(V5)),
subject to the \emph{rise gate}: $g_{t-1}\ge 1$, or $t\ge 2$ and
$\ell_{t-1}<\ell^*$;
\item \emph{decorated $L$--wall, and $W^*$ with $\tau^*=L$:} the bare
multiplier $M_J$ \emph{(no $Q$)}, subject to a \emph{glue demand} on
its left neighbour: either $t\ge 2$, $g_{t-1}=0$, the head cluster of
wall $t-1$ is empty, and the pair gate holds ($d_{t-1}>d_t$, or
$d_{t-1}=d_t$ and $\ell_{t-1}<\ell_t$, for a decorated $L$--wall;
$d_{t-1}=0$ and $\ell_{t-1}<\ell^*$ for $W^*$ of type $L$); or, in
Case~B only, the last row of gap $t-1$ is $P$ with empty $P$--dress,
in which case the glued wall's $M_J$ occupies $P$'s dress slot
(Lemma~\ref{lem:Pblocks}).
\end{enumerate}
These are the wall operators in the absence of a glue demand from
wall $t+1$: if wall $t+1$ is glued to wall $t$ (case \textup{(5)}
read from the right), the head cluster of wall $t$ is forced
empty, so its factor $M_J$ (inside $\LL$ in \textup{(1)} and
\textup{(4)}, bare in \textup{(5)}) is suppressed, and the composite
operator of the resulting maximal chain is assembled in
Lemma~\ref{lem:chains}.
Moreover, whenever the gates in \textup{(3)} and \textup{(4)} are
tested with $g_{t-1}=0$ and wall $t-1$ of type $L$, the comparison
$\ell_{t-1}<\ell_t$ (resp.\ $\ell_{t-1}<\ell^*$) holds automatically;
consequently the factor $J$ of wall $t-1$ never splits: either every
term of $J$ is admissible, or none is.  (This non--splitting
assertion concerns the decoration gates \textup{(3)} and
\textup{(4)}; a glue demand does not split a factor $J$ either, but
for the different reason that it removes the factor wholesale: the suppression just described.)
\end{lemma}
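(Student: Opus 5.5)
The plan is to run the emitter--side reattribution described at the start of this section wall by wall, using only Lemma~\ref{lem:dict} for the weights and validity, and Lemma~\ref{lem:cluster} for the head clusters. For each wall type, three things have to be checked:
\begin{itemize}
\item[(a)] which attacks the wall \emph{emits} to later rows, implemented as substitutions on the variables of later letters;
\item[(b)] which decorated actives it may \emph{host} immediately to its right, which gives the multiplier $M_J$ or nothing;
\item[(c)] which validity conditions (V1)--(V5) involve the wall itself or its predecessor, which gives the gates and glue demands.
\end{itemize}
The attacks the wall \emph{receives} from earlier actives are already charged as the scalars $q^{h_t}$ in Lemma~\ref{lem:gapsteps}. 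Receptions from walls are in $D(\Xi)$, and receptions from $P$ are deferred to Section~\ref{sec:caseB}. So in each case the operator is determined by emission and hosting alone.

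\textbf{Cases (1) and (2).} An undecorated $L$--wall attacks every later diagonal--$0$ active by the first receiver term in Lemma~\ref{lem:dict}, which gives $Q$. By (V3) it hosts a head cluster with generating function $J$. The cluster's letters sit to the right of the wall, so the wall's substitution also scales them; hence the composite is $Q\circ M_J=J^{(q)}Q=\LL$, which matches the identity $\LL=J^{(q)}Q$ from Section~\ref{sec:algebra}. An undecorated $H$--wall emits nothing to actives, since no active receiver term counts $H$--walls. It hosts no cluster by Lemma~\ref{lem:cluster}, so it contributes the identity.

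\textbf{Cases (3) and (4).} A decorated $H$--wall emits nothing and hosts nothing, but (V4) requires its predecessor to be an active, $W^*$/$P$, or wall $t-1$ with $\ell_{t-1}<\ell_t$. Some care is needed here. The predecessor is an active if and only if gap $t-1$ is nonempty. Gap $t-1$ is nonempty exactly when $g_{t-1}\ge1$, or when gap $t-1$ consists only of decorated actives. The latter can happen only after an $L$--wall or $W^*$ host. In that situation, if $d_{t-1}=0$ and the host is an $L$--wall, the inequality $\ell_{t-1}<r<\ell_t$ holds anyway. This is exactly the non--splitting claim: whether or not the cluster $J$ is empty, the gate is satisfied uniformly. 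So the gate can be stated purely in terms of $g$ and $\Xi$.

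For $W^*$ of type $H$, the row emits the third receiver term to all later actives, which gives $Q$. Its drop to diagonal $0$ validates a cluster by (V3), which gives $J$, so again the operator is $\LL$. The rise gate (V1) is handled by the same case split on the predecessor.

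\textbf{Case (5).} Take a decorated $L$--wall, or $W^*$ of type $L$. Its attacks on later actives are screened; $W^*$ of type $L$ never attacks actives at all. Hence there is no $Q$, but the row still hosts $J$. The predecessor must validate the row:
\begin{itemize}
\item An active predecessor is impossible. A flat active predecessor has label $\ge r>\ell_t$, and (V1) with $\tau^*=L$ fails.
\item So the predecessor is a wall $t-1$ satisfying the pair gate. This forces gap $t-1$ to be empty, so $g_{t-1}=0$ and the head cluster of wall $t-1$ is empty.
\item In Case~B, the predecessor can instead be $P$ with an empty dress.
\end{itemize}
Because the glue demand forces the left neighbour's cluster to be empty, that neighbour's factor $M_J$ is removed wholesale rather than split term by term.

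\textbf{Main obstacle.} I expect the main difficulty to be the bookkeeping in (3)--(5) when gap $t-1$ contains only decorated actives. There the refined datum $g_{t-1}=0$ does not by itself tell us whether the predecessor is a wall or a cluster row. The plan is to enumerate the four cluster states of Lemma~\ref{lem:cluster} explicitly and check, case by case, that the gate outcome is constant across all of them. For the gates this follows from $\ell_{t-1}<r$; for the glue demand it follows from the demand killing every nonempty state.
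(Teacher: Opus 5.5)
Your proposal is correct and follows essentially the same route as the paper. For each wall type you read off emission (giving $Q$ or nothing) and hosting (giving $M_J$ or nothing) from Lemma~\ref{lem:dict} and Lemma~\ref{lem:cluster}. You then derive the gates and glue demands from (V1) and (V4) by splitting on whether gap $t-1$ is empty, and obtain the non-splitting of $J$ from $\ell_{t-1}<r<r+1<\ell_t$, exactly as the paper does. The one point you leave implicit, and should state, is the boundary case $t=1$ with $g_0=0$: wall $1$ is then the first row of the word, so by (V5) and (V1) it can be neither a valley nor the end of a rise. This is why the gates and the wall--wall glue alternative all require $t\ge 2$.
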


\begin{proof}
(1) An undecorated $L$--wall attacks every later diagonal--$0$
active (decorated or not, including its own head cluster), giving the substitution $Q$ applied to everything on its right (in
Case~B it does not attack the diagonal--one row $P$, whose content
is carried by the transparent markers of Section~\ref{sec:caseB});
composing with the cluster's
content $M_J$ (Lemma~\ref{lem:cluster}) and commuting $Q$ past the
multiplier yields $Q\,M_J=J^{(q)}Q=\LL$.  By Lemma~\ref{lem:dict} it
receives only from an earlier $P$ (the scalar $q^{\Lambda}$, charged
to $P$ in Lemma~\ref{lem:Pblocks}) and from earlier outside rows
(inside $D(\Xi)$).

(2) A high wall attacks no later active (its label exceeds both
active labels, killing primary attacks, and a secondary attack from
diagonal $0$ has no target on diagonal $-1$); by
Lemma~\ref{lem:cluster} it hosts no cluster.

(3) A decorated $H$--wall must be a valley, i.e.\ (V4) must hold for
its pred.  If $g_{t-1}\ge 1$, its pred is the last row of gap $t-1$: a
diagonal--$0$ active (compound steps and clusters end in actives),
validating by the label inequality $r,r+1<\ell_t$, or, in Case~B,
possibly $P$ with empty dress, validating by its drop.  If $g_{t-1}=0$, the pred is
the last cluster letter of wall $t-1$ (an active, validating) when that cluster is nonempty, and wall $t-1$ itself otherwise,
validating iff $d_{t-1}=1$ (drop) or $\ell_{t-1}<\ell_t$.  When wall
$t-1$ is of type $L$, the comparison
$\ell_{t-1}<r<r+1<\ell_t$ holds automatically, so the cluster and
no--cluster terms of wall $t-1$'s factor $J$ are simultaneously
admissible, which is the final claim; when wall $t-1$ is high there
is no cluster and the comparison decides; $t=1$ with $g_0=0$ leaves
no valid pred.  Identical reasoning, with the rise condition (V1)
in place of (V4), and using that every active label is $<\ell^*$
when $\tau^*=H$, gives (4); note that $W^*$ of type $H$ does
attack every later active (a secondary attack, since $\ell^*>r+1$)
and does host a cluster (its successor is a drop), whence the
operator $\LL$ again.

(5) By (V4) a decorated $L$--wall admits \emph{no} active pred (every
active label exceeds $\ell_t$, and an active pred is a flat pred),
and by (V1) the same holds for $W^*$ of type $L$ (the rise needs a
pred label $<\ell^*<r$).  Hence its pred must be wall $t-1$ itself (so $t\ge 2$)
--- forcing $g_{t-1}=0$ \emph{and} the cluster of wall $t-1$ empty,
with the stated pair gate --- or, in Case~B, the row $P$ (a drop
pred, always valid, with no condition on labels); in the latter case
$P$ must be the last row of gap $t-1$, and $P$'s own dress cluster
must be empty, the glued wall's cluster occupying the unique dress
slot after $P$.  (For $t=1$ only the $P$--alternative is available;
failing it, the refined tuple is empty.)  The glued wall emits
nothing: a decorated $L$--wall
is screened, and $W^*$ of type $L$ attacks no later active since
$\ell^*<r$.  Its operator is therefore the bare cluster multiplier
$M_J$.
\end{proof}

\begin{lemma}[chains anchor]\label{lem:chains}
Call a maximal run of consecutive glued walls (type \textup{(5)}
of Lemma~\ref{lem:walls}) a \emph{glue chain}.  This lemma treats only
chains whose final demand is not satisfied by $P$; the $P$--anchored
case is part of Lemma~\ref{lem:Pblocks}.  Read leftwards, such a chain's
final demand lands on a wall of type \textup{(1)} or \textup{(4)} (the
\emph{anchor}), or the refined tuple is empty.  A chain with anchor of
type \textup{(1)} or \textup{(4)} contributes the single operator
$\LL$: the anchor supplies $Q$, every glued wall except the last has its
$M_J$ suppressed (empty cluster, forced by the next glue demand), and
the last glued wall supplies the unique surviving $M_J$, so the
composite is $Q\circ M_J=\LL$.  In particular, in any flat or Case~A
refined class, every surviving factor $J$ in the word is eventually
multiplied by a $Q$ on its left, and the operator word is, gates
permitting, of the form
\begin{equation}\label{eq:flatword}
q^{\,\mathrm{const}}\;\;
T^{g_0}\,\mathcal{W}_1\,T^{g_1}\,\mathcal{W}_2\cdots
\mathcal{W}_s\,T^{g_s}\,\onev,
\qquad \mathcal{W}_t\in\{\LL,\ \mathrm{Id}\},
\end{equation}
where $\mathcal{W}_t=\LL$ exactly when wall $t$ anchors a (possibly
singleton) chain (i.e.\ is an undecorated $L$--wall or $W^*$ with
$\tau^*=H$), and a refined tuple violating a gate contributes $0$.
The constant is explicit and color--blind:
$\mathrm{const}=\sum_t h_tg_t$, one factor $q^{h_t}$ per undecorated
active of gap $t$.  (After the normalization of
Proposition~\ref{prop:flatA}, pulling each $\mathcal{W}_t=\LL$ to the
left across $T^{g_0+\cdots+g_{t-1}}$ by~\eqref{eq:Lpull} contributes
the further explicit exponent, so altogether
$c(\Xi,g)=\sum_t h_tg_t-\sum_{t:\,\mathcal{W}_t=\LL}
(g_0+\cdots+g_{t-1})$ there; in Example~\ref{ex:sharp} this gives
$c=g_0$.)
\end{lemma}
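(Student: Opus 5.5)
The plan is to argue by reading the glue chain from right to left and propagating the forced conditions of Lemma~\ref{lem:walls}(5). Take a glue chain, i.e.\ a maximal run of consecutive walls $t_0+1,\dots,t_1$, each of type (5) (decorated $L$--wall or $W^*$ of type $L$). By assumption its final (leftmost) demand is not met by $P$. The demand of wall $t_0+1$ then forces three things: $t_0\ge1$, $g_{t_0}=0$, and wall $t_0$ immediately to its left with empty head cluster. By maximality, wall $t_0$ is not of type (5). The remaining possibilities are types (1)--(4). For types (2) and (3) (high walls, decorated or not) the pair gate of (5) fails: for a decorated $L$--wall we need $d_{t_0}\ge d_{t_0+1}$ together with $\ell_{t_0}<\ell_{t_0+1}<r$ on equal diagonals, and for $W^*$ of type $L$ we need $\ell_{t_0}<\ell^*<r$. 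Both are impossible when $\ell_{t_0}>r+1$. The case of a high wall on diagonal $0$ preceding a diagonal-$0$ decorated $L$--wall needs no separate treatment, since then $d_{t_0}=d_{t_0+1}$ and the label comparison fails. Hence either the anchor has type (1) or (4), or the tuple contributes $0$. This settles the first assertion.

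Next I will compute the composite operator. The anchor contributes $\LL=Q\circ M_J$, where $M_J$ is its own cluster factor. The glue demand of wall $t_0+1$ forces this cluster to be empty, so by the suppression rule of Lemma~\ref{lem:walls} only $Q$ survives. Similarly each intermediate glued wall $t_0+1,\dots,t_1-1$ has its bare $M_J$ suppressed by the demand of its right neighbour. The last wall $t_1$ receives no demand, because wall $t_1+1$ is not glued by maximality, so its $M_J$ survives. No operators lie in between, because the forced conditions $g_t=0$ leave no compound steps in the intervening gaps. Decorated rows outside clusters are excluded by Lemma~\ref{lem:cluster}, and the glued walls emit nothing. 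The composite is therefore $Q\circ M_J=J^{(q)}Q=\LL$. All $q$-weights of receptions by walls are absorbed into $D(\Xi)$ or into the gap scalars.

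For the word form \eqref{eq:flatword}, I will concatenate the gap contributions $q^{h_t}T$ from Lemma~\ref{lem:gapsteps}, giving $q^{h_tg_t}T^{g_t}$ per gap, with the wall operators. Walls of types (2) and (3) give $\mathrm{Id}$. Each chain collapses to a single $\LL$ placed at its anchor, and the glued walls contribute nothing further. In the flat and Case A settings there is no $P$, so every chain is anchored, and every surviving $J$ sits to the right of its anchor's $Q$. Failed gates, meaning the gates in (3), (4), and the chain conditions, contribute $0$. Gluing that requires $g_{t}=0$ is consistent with writing $T^{0}$. Summing the exponents gives $\sum_t h_tg_t$, which is color-blind because $h_t$ depends only on $\Xi$.

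The parenthetical normalization then follows from \eqref{eq:Lpull}: moving each $\LL$ left across $T^{g_0+\cdots+g_{t-1}}$ produces $q^{-(g_0+\cdots+g_{t-1})}$.

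The main obstacle is the bookkeeping of the suppression step. I need to check that the forced-empty clusters and the forced $g=0$ are consistent across the chain, and that exactly one $M_J$ survives, so that no stray $Q$ without $M_J$ or $M_J$ without $Q$ remains. I will handle this by induction on the chain length, applying the local rule of Lemma~\ref{lem:walls} at each link.
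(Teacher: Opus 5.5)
Your proposal is correct and follows essentially the same route as the paper's proof. Both arguments propagate the glue demand leftward to a non-glued terminus, rule out high diagonal-zero termini because the pair gate fails, and suppress every $M_J$ except the last so the chain composite is $Q\circ M_J=\LL$. Both then assemble the gap steps $q^{h_t}T$ with the wall operators (using $T^{0}$ across the forced-empty gaps) and obtain the normalized exponent from \eqref{eq:Lpull}.
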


\begin{proof}
A glue demand not satisfied by $P$ requires its left neighbour to be a
wall, with the pair gate.  If that neighbour is itself a glued wall, the
demand propagates (the pair gate between consecutive glued walls is the
stated label comparison), and the chain must terminate leftwards at a
wall that is not glued, i.e.\ of type \textup{(1)--(4)}, or at the start
of the word (empty tuple).  A diagonal--zero high--wall terminus, of
type \textup{(2)} or \textup{(3)}, fails the pair gate
($\ell_{H}>\ell_{L}$, and a diagonal--$0$ wall offers no drop); $W^*$
with $\tau^*=H$, on diagonal $1$, anchors by its drop.  Hence the
anchor is of type \textup{(1)} or \textup{(4)}.  Within the chain, each
glue demand forces the empty cluster of the wall to its left,
so only the rightmost glued wall retains its $M_J$ (this matches
the combinatorics, since the consecutive walls are adjacent rows and
a head cluster can only follow the last of them), and the anchor's
own cluster is forced empty by the first demand, so the anchor
contributes its bare $Q$.  The composite operator of the chain, read
in word order \emph{anchor, glued walls, final cluster}, is
$Q\circ M_J=\LL$.  Assembling Lemma~\ref{lem:gapsteps} for the gaps
and Lemma~\ref{lem:walls} for the walls
yields~\eqref{eq:flatword}; the scalars $q^{h_t}$ attach one factor
per undecorated active of gap $t$, giving $q^{\sum_t h_tg_t}$,
manifestly independent of the colors and of the decoration pattern
within the gaps.
\end{proof}

\begin{remark}[local transition table]\label{rem:table}
For reference, the complete list of local blocks and their operators, including the Case~B blocks of
Section~\ref{sec:caseB}, is:
\begin{center}
\small
\begin{tabular}{lll}
\toprule
block (read left to right) & operator & gate / proviso\\
\midrule
compound step in gap $t$ (Lem.~\ref{lem:gapsteps}) & $q^{h_t}\,T$ & ---\\
undec.\ $L$--wall $+$ cluster (Lem.~\ref{lem:walls}(1)) & $\LL$ & ---\\
undec.\ $H$--wall (Lem.~\ref{lem:walls}(2)) & $\mathrm{Id}$ & ---\\
dec.\ $H$--wall (Lem.~\ref{lem:walls}(3)) & $\mathrm{Id}$ & gate (3)\\
$W^*$, $\tau^*{=}H$, $+$ cluster (Lem.~\ref{lem:walls}(4)) & $\LL$ & rise gate (4)\\
dec.\ $L$--wall; $W^*$, $\tau^*{=}L$ (Lem.~\ref{lem:walls}(5)) & $M_J$ & glue demand (5)\\
$P$--block (a) (Lem.~\ref{lem:Pblocks}) & $q^{h_t+\Lambda_t}\,\Mu\,\hat p_2\,\LL$ & ---\\
$P$--block (b)$+$(c) (Lem.~\ref{lem:Pblocks}) & $q^{\Lambda_t}(\hat p_1\LL+\hat p_2\ddb\LL)$ & host wall's gate/glue\\
\bottomrule
\end{tabular}
\end{center}
A glue demand from the wall to the right suppresses the $J$--factor
of the block to its left (its head cluster, or $P$'s dress); a
maximal chain of glued walls contributes the single operator $\LL$
at its anchor (Lemma~\ref{lem:chains}), or leaves the $P$--block
composites unchanged when the anchor is $P$
(Lemma~\ref{lem:Pblocks}).  Every admissible word is a concatenation
of these blocks, and each block's operator and gate depend only on
$\Xi$, the tuple $g$, and the placement of $P$, never on the
colors or the decoration pattern inside the gaps.
\end{remark}

\begin{proposition}[assembly: flat case and Case A]\label{prop:flatA}
Let $\sortw(m)\in\{0^n,0^{n-1}1\}$ and let the class $(m,k,\Xi)$ be
flat or of Case~A.  For every refined tuple $g$, either
$\Phi^{(r)}_{m,k,\Xi,g}=0$, or
\[
\Phi^{(r)}_{m,k,\Xi,g}(u,v;q)
\;=\;
q^{\,D(\Xi)-k}\;q^{\,c}\;
\Bigl[\,\text{$(u,v)$--degree }N,\ z\text{--degree }\kappa\,\Bigr]
\;\LL^{\,k_Q}\,T^{\,|g|}\,\onev ,
\]
where $|g|=\sum_tg_t$, $\kappa=N-|g|$,
$k_Q=\#\{\text{undecorated }L\text{--walls}\}+[\tau^*{=}H]$ (with
$[\tau^*{=}H]:=0$ in the flat case, where $W^*$ is absent),
$c=c(\Xi,g)\in\mathbb{Z}$ is the explicit color--blind exponent of
Lemma~\ref{lem:chains}, and the bracket denotes
extraction of the indicated bigraded component.  In particular
$\Phi^{(r)}_{m,k,\Xi,g}\in\sym$.
\end{proposition}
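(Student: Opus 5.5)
The plan has three steps: reduce the class generating function to the operator word \eqref{eq:flatword}, normalize that word, and then read off symmetry.

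\emph{Step 1 (the word).} By Lemma~\ref{lem:dict}, $q^{\dinv(O)}=q^{D(\Xi)-k}q^{\operatorname{brk}(O)}$ for every object $O$ of the class. So it suffices to show that $\sum_O q^{\operatorname{brk}(O)}u^{\#1}v^{\#2}z^{\#\mathrm{dec}}$, taken over the refined class $(m,k,\Xi,g)$, equals the word of Lemma~\ref{lem:chains} applied to $\onev$ and restricted to the correct bigraded component. I expand the word branch by branch.

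\begin{itemize}
\item In gap $t$, the factor $T^{g_t}$ expands by Lemma~\ref{lem:gapsteps} into the $3^{g_t}$ sequences of compound steps. These are exactly the admissible arrangements of the $g_t$ undecorated actives of the gap together with their trailing decorated $2$'s.
\item Each $\LL$ expands $J$ into the four possible head clusters (Lemma~\ref{lem:cluster}).
\item By the non-splitting clause of Lemma~\ref{lem:walls}, gates and glue demands either accept a whole factor or kill it. The $g$-dependent gates have already been assumed satisfied, since otherwise $\Phi=0$.
\end{itemize}

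Under this expansion, monomials in $u,v,z$ correspond bijectively to objects with the prescribed undecorated counts. The $z$-degree records the number of decorated actives. Fixing $(u,v)$-degree $N$ and $z$-degree $\kappa$ selects precisely the objects with $\kappa$ decorated actives. The emitter-side bookkeeping described at the start of Section~\ref{sec:walls} shows that the $q$-weight of each monomial equals $q^{\operatorname{brk}}$:
\begin{itemize}
\item a $\sigma_v$ from an undecorated $1$ multiplies by $q$ every later $v$, which matches the second active receiver term;
\item a $Q$ from an anchor multiplies by $q$ every later active, which matches the $L$-wall term and the $W^*$ term when $\tau^*=H$;
\item receptions by $H$-walls give the scalars $q^{h_t}$.
\end{itemize}
I expect this step to be the main obstacle. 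The difficulty is not any single calculation but the verification that the bijection is exact. In particular, the glued chains must contribute exactly one $M_J$ and one $Q$ in the right order, and the clusters hosted by decorated walls must receive the $Q$ of the anchor to their left. I would check this chain by chain, using Lemma~\ref{lem:chains}.

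\emph{Step 2 (normalization).} Next I move every $\LL$ in $T^{g_0}\mathcal W_1T^{g_1}\cdots$ to the far left using \eqref{eq:Lpull}. Each $\LL$ at wall $t$ passes $T^{g_0+\cdots+g_{t-1}}$ and picks up the factor $q^{-(g_0+\cdots+g_{t-1})}$. The $\mathrm{Id}$ factors disappear. This yields $q^{c}\LL^{k_Q}T^{|g|}\onev$, where $c$ is the color-blind exponent stated in Lemma~\ref{lem:chains} and $k_Q$ counts the anchors. The anchors are exactly the undecorated $L$-walls together with $W^*$ when $\tau^*=H$. Each decorated $L$-wall, and $W^*$ when $\tau^*=L$, is absorbed into a chain, so it contributes no extra $\LL$.

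\emph{Step 3 (symmetry).} By Theorem~\ref{thm:identityI}, $T^{|g|}\onev\in\sym$. By Lemma~\ref{lem:comm}(7), $\LL^{k_Q}$ preserves $\sym$. The scalar $q^{D(\Xi)-k+c}$ does not involve $u$ or $v$. Finally, $\swap$ preserves both gradings, so the extracted bigraded component also lies in $\sym$. This gives $\Phi^{(r)}_{m,k,\Xi,g}\in\sym$.
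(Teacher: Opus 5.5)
Your proposal is correct and follows essentially the paper's own route. You identify the refined class sum, via Lemma~\ref{lem:dict} and Lemma~\ref{lem:chains}, with $q^{D(\Xi)-k}$ times the bigraded extraction of the word~\eqref{eq:flatword}, pull every $\LL$ to the left with~\eqref{eq:Lpull}, and conclude with Theorem~\ref{thm:identityI}, Lemma~\ref{lem:comm}(7), and the fact that bigraded extraction preserves $\sym$. One wording fix: it is the terms of the expanded word (the branches), not the monomials in $u,v,z$, that are in bijection with the objects of the refined class, since distinct objects can share a monomial and differ only in their $q$--weight.
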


\begin{proof}
By Lemmas~\ref{lem:dict} and~\ref{lem:chains} the class generating
function is $q^{D(\Xi)-k}$ times the indicated extraction of the
word~\eqref{eq:flatword}: the extraction imposes the class size.  The
variable $z$ marks only decorated active rows; outside decorations are
fixed by the scaffold flags and contribute only through the scalar
$q^{D(\Xi)-k}$.  Each active row contributes exactly one factor $u$ or
$v$, while the $|g|$ compound steps account for the undecorated active
rows, so total $(u,v)$--degree $N$ and $z$--degree $\kappa=N-|g|$
force precisely the decorated active rows of the refined class.  Pulling
every $\LL$ to the left
with~\eqref{eq:Lpull} turns the word into
$q^{-e}\,\LL^{k_Q}T^{|g|}\onev$ for a color--blind integer $e$: each
pull of one $\LL$ across $T^{a}$ contributes $q^{-a}$, a function of
$g$ and the wall positions only.  By Theorem~\ref{thm:identityI},
$T^{|g|}\onev\in\sym$; by Lemma~\ref{lem:comm}(7),
$\LL^{k_Q}T^{|g|}\onev\in\sym$; and extraction of a bigraded
component preserves $\sym$.
\end{proof}

\begin{corollary}[empty scaffold]\label{cor:empty}
For $\Xi=\varnothing$, $m=0^{n}$, and $0\le k\le n$,
\[
\Phi^{(r)}_{0^n,k,\varnothing}(u,v;q)
=q^{-k}\,
\Bigl[\,\text{$(u,v)$--degree }n,\ z\text{--degree }k\,\Bigr]\,
T^{\,n-k}\onev,
\]
which is symmetric; for $k>n$ the class is empty and the generating
function vanishes.  This is the wall--free area--zero exchange
kernel.
\end{corollary}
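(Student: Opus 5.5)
This will be obtained as the special case $s=0$, $m=0^n$ of Proposition~\ref{prop:flatA}, with every ingredient checked directly. First we dispose of the degenerate ranges. If $k>n$, no object has $k$ decorated rows among $n$ rows, so the class is empty and $\Phi=0$. For $\Xi=\varnothing$ there are no walls, so $D(\Xi)=0$ (Remark~\ref{rem:D}), $N=n$, and $\kappa=k$. There is a single gap, gap $0$, and it has $h_0=\Lambda_0=0$. The only refined tuple is $g=(g_0)$ with $g_0=n-k$, so $\Phi^{(r)}_{0^n,k,\varnothing}=\Phi^{(r)}_{0^n,k,\varnothing,(n-k)}$.

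Next we identify the operator word. By Lemma~\ref{lem:cluster}, at the start of the word no decorated active may precede the first undecorated one, so the word has no head cluster. Lemma~\ref{lem:gapsteps} then cuts the whole word into exactly $g_0=n-k$ compound steps, each with operator $q^{h_0}T=T$. By Lemma~\ref{lem:dict}, $\dinv=-k+\operatorname{brk}$, and $\operatorname{brk}$ only counts active--active attacks from undecorated color--$1$ rows. The substitution $\sigma_v$ inside $T$ realizes exactly these attacks, as explained at the start of Section~\ref{sec:walls}.

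The generating function is therefore $q^{-k}$ times the component of $T^{n-k}\onev$ of $(u,v)$--degree $n$ and $z$--degree $k$. This is \eqref{eq:flatword} with $s=0$, $c=0$ and $k_Q=0$. The extraction is needed because, when $k<n-k$, the polynomial $T^{n-k}\onev$ also contains components with fewer decorations; the bigraded selection keeps precisely the objects with $n$ rows and $k$ decorations.

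Symmetry now follows from Theorem~\ref{thm:identityI}, which gives $T^{n-k}\onev\in\sym$. Since $\swap$ preserves both gradings, the extracted component also lies in $\sym$. The only point that needs care is the bijection between words of compound steps and valid decorated objects. One must check that the first row is never decorated, by (V5), and that each decorated $2$ follows an undecorated $1$. Both facts are already contained in Lemmas~\ref{lem:cluster} and~\ref{lem:gapsteps}, so no real obstacle remains.
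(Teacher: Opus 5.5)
Your proposal is correct and follows the paper's own route. The paper's proof is exactly the specialization of Proposition~\ref{prop:flatA} to one gap, the single refined tuple $g=(n-k)$, no gates and $D(\varnothing)=0$, and your re-derivation of the word $T^{n-k}\onev$ from Lemmas~\ref{lem:dict}, \ref{lem:cluster} and~\ref{lem:gapsteps} simply unpacks that specialization.

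One harmless slip in your side remark: the $z^{\kappa}$--part of $T^{n-k}\onev$ has $(u,v)$--degree $n-k+\kappa$ for $0\le\kappa\le n-k$. So components with fewer decorations occur whenever $k\ge1$, those with more occur exactly when $k<n-k$, and either grading alone already performs the extraction.
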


\begin{proof}
With no walls there is a single gap, one refined tuple
$g=(n-k)$, no gates, and $D(\varnothing)=0$; apply
Proposition~\ref{prop:flatA}.
\end{proof}

\section{Case B: the diagonal--one active row}\label{sec:caseB}

In Case B all walls have diagonal $0$ and one active row $P$ sits on
diagonal $1$.  Lemma~\ref{lem:dict} gives: $P$ is never decorated;
$P$ emits a secondary attack to \emph{every} later low wall, decorated or not, contributing the scalar $q^{\Lambda_t}$ if $P$
lies in gap $t$, for either color of $P$; if $P$ has color $2$ it
emits to every later active of color $1$, the substitution
$\sigma_u$; and $P$ receives nothing: earlier rows attack $P$ in
no case.  Consequently $P$'s own content variable must be
\emph{shielded} from every operator in the word.  Formally, we
enlarge the ring to $\widehat R:=R[\hat p_1,\hat p_2]$ and extend
$\sigma_u,\sigma_v$ (hence $\dd,\ddb,Q,T,\LL$ and every
multiplier $M_F$, $F\in R$) to $\widehat R$ by letting them fix
$\hat p_1,\hat p_2$ and act on the $R$--coefficients; the markers
are thus \emph{transparent} to all operators.  The row $P$
contributes the marker $\hat p_1$ (color $1$) or $\hat p_2$ (color
$2$), and only in the final polynomial do we apply the
$\mathbb{Z}[q,q^{-1}]$--algebra substitution $\hat p_1\mapsto u$,
$\hat p_2\mapsto v$.  By (V2) the pred
of $P$ is an undecorated active of color $1$, or the decorated head
$1^*$ of an $L$--type wall's cluster (in which case $P$ has color
$2$, forced, since $(r,r)$ is not an ascent) or an $L$--type wall
directly.  These are the placements (a), (b), (c) below; (b) and
(c) require the wall $t$, hence $t\ge1$, and in gap $0$ only (a)
occurs, since the word begins with an undecorated diagonal--$0$
active and $P$ is never its first row.  The row
following $P$ sees a drop, so $P$ enables its own \emph{dress}
cluster with content $J$: Lemma~\ref{lem:cluster} applies verbatim
with $P$ in place of the wall.  The proofs of this section invoke
Lemmas~\ref{lem:walls} and~\ref{lem:chains} only through clauses in
which no glue demand is satisfied by $P$.  When a glue demand is
satisfied by $P$, the needed structure is derived directly in the glued
case of Lemma~\ref{lem:Pblocks} below from Lemmas~\ref{lem:dict} and~\ref{lem:cluster}.

Throughout, the \emph{$P$--block} comprises the rows removed in
Lemma~\ref{lem:gapsteps} (its \emph{active part}: $P$, its dress,
and $P$'s active pred when present) together with the operator slots
whose validity is anchored at $P$.  Thus placements (b) and (c)
include the host wall's slot; when that host is decorated, its
leftward contracted glue chain of Lemma~\ref{lem:chains} is included
as well.  In every placement, including (a), a maximal rightward glue
chain whose first demand is satisfied by $P$ is also contracted into
the block, as described in the glued clause below.

\begin{lemma}[$P$--blocks]\label{lem:Pblocks}
Fix a gap $t$; write $X$ for the operator word of everything to the
right of the $P$--block, and $h=h_t$, $\Lambda=\Lambda_t$.
\begin{enumerate}
\item[(a)] \emph{$P$ after an undecorated $1$ inside gap $t$.}  The
compound (undecorated $1$, then $P$ of forced color $2$, then $P$'s
dress) contributes the operator
$q^{\,h+\Lambda}\;\Mu\,\hat p_2\;\LL$, inserted between the compound
steps of the gap; the factor $q^{h}$ is carried by the undecorated
$1$, while $P$ carries the separate low--wall scalar $q^{\Lambda}$ and
no $q^h$--factor.
\item[(b)$+$(c)] \emph{$P$ at the start of gap $t$, after wall $t$ of
type $L$ (either flag; $t\ge1$).}  Summing the placements --- $P$ directly
after the wall, both colors (c), and $P$ after the head $1^*$, color
$2$ forced (b) --- composed with wall $t$'s own operator and dress
slot, yields at wall $t$'s position the operator
\[
q^{\Lambda}\,\bigl(\hat p_1\,\LL\;+\;\hat p_2\,\ddb\,\LL\bigr)
\;=\;q^{\Lambda}\,\ddb\,(\hat p_1+\hat p_2\,\ddb)\,\dd
\]
when wall $t$ is an undecorated $L$--wall; when wall $t$ is a
decorated $L$--wall the same composite arises once the glue chain of
Lemma~\ref{lem:chains} reaches its anchor's $Q$, and wall $t$'s own
glue demand applies unchanged toward wall $t-1$.
\item[(glued)] If a decorated $L$--wall immediately to the right of $P$
has its predecessor demand satisfied by $P$, then $P$ is the last row
of the preceding
gap and in \textup{(a)--(c)} the factor $J$ of $P$'s dress is
replaced by $1$ while that wall's $M_J$ takes the slot, leaving each
composite operator unchanged.  The same holds for a maximal glue
chain anchored at $P$: the dress slot and the $M_J$ of every glued
wall except the last are suppressed, the last glued wall supplies the
unique surviving factor $J$, and each composite operator is again
unchanged.
\end{enumerate}
\end{lemma}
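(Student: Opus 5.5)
The plan is to verify each clause by the emitter-side reattribution used in Lemmas~\ref{lem:gapsteps} and~\ref{lem:walls}. Every row of the $P$--block is read off Lemma~\ref{lem:dict} as three pieces: its content marker, the substitution it performs on everything to its right (its attacks on later diagonal--$0$ actives), and the explicit scalar it carries (its attacks on later walls). Then compose these operators in word order and simplify with Lemma~\ref{lem:comm}. Throughout I use that the markers $\hat p_1,\hat p_2$ commute with every operator, since they are transparent. This is how the shielding of $P$ (it receives nothing) is encoded.

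\emph{Clause (a).} By (V2), $P$ has color $2$ and its pred is the undecorated $1$. That $1$ contributes $u$, performs $\sigma_v$, and carries $q^{h}$; the $\sigma_v$ acts trivially on the transparent $\hat p_2$. The row $P$ contributes $\hat p_2$, carries $q^{\Lambda}$ from its secondary attacks on later low walls, and performs $\sigma_u$, since a color--$2$ $P$ attacks every later color--$1$ active. Its dress contributes $M_J$ by Lemma~\ref{lem:cluster}. In word order the composite is
\[
q^{h+\Lambda}\,\Mu\,\sigma_v\,\hat p_2\,\sigma_u\,M_J=q^{h+\Lambda}\,\Mu\,\hat p_2\,\sigma_v\sigma_u M_J=q^{h+\Lambda}\,\Mu\,\hat p_2\,J^{(q)}Q=q^{h+\Lambda}\Mu\hat p_2\LL .
\]
Here the $\sigma_v\sigma_u$ also hits the dress, which is correct: $P$ attacks a dress $1^*$, and the undecorated $1$ attacks a dress $2^*$.

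\emph{Clauses (b)$+$(c).} Take an undecorated $L$--wall. It supplies $Q$, which acts on the actives after it but not on $P$, by transparency. There are three placements to sum. In (c) with color $1$ the word is $Q\,\hat p_1\,M_J$, which gives $\hat p_1\LL$. In (c) with color $2$ the word is $Q\,\hat p_2\,\sigma_u M_J$. In (b), the head $1^*$ contributes $zu$ and emits nothing, $P$ then has color $2$, and it is followed by its dress; the word is $Q\,(zu)\,\hat p_2\,\sigma_u M_J$. Adding the last two gives
\[
\hat p_2\,Q(1+zu)\sigma_u M_J=\hat p_2\,(1+qzu)\sigma_u\,Q M_J=\hat p_2\,\ddb\,\LL .
\]
All three carry the scalar $q^{\Lambda}$. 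The factored form follows from $\LL=\dd\ddb=\ddb\dd$ together with commutation of the markers. For a decorated host wall, Lemma~\ref{lem:chains} replaces the wall's own $Q$ by the anchor's $Q$, which is transported through the bare multipliers of the chain. The computation is otherwise identical, and the host's leftward glue demand is untouched.

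\emph{Clause (glued).} A glue demand satisfied by $P$ forces $P$ to be the last row of its gap and its dress to be empty, by (V4) and (V1) applied to a drop predecessor. The decorated wall emits nothing and contributes $M_J$, with $J\in\sym$ a multiplier, in exactly the dress slot. So replacing $J$ by $1$ in the dress and appending the wall's $M_J$ leaves each composite unchanged. Along a longer chain, the cluster argument of Lemma~\ref{lem:chains} suppresses every $M_J$ but the last.

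The step I expect to be the main obstacle is the bookkeeping in (b). There I must check that the substitution $\sigma_u$ performed by $P$ must not act on the head $1^*$ to its left, while it must act on $P$'s dress. I also need to rule out double counting between placement (b) and a head cluster of type $\{1^*2^*\}$. That cluster is not available here, because $P$ occupies the slot after $1^*$. This will be checked directly against (V2) and (V3).
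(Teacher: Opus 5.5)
Your proposal is correct and follows the paper's proof essentially step by step. It uses the same emitter-side reading of the undecorated $1$, of $P$, of the head $1^*$ and of the dress, composed in word order, and it reduces the decorated-host and glued cases to the undecorated computation through the anchor's $Q$ and the fact that screened decorated $L$--walls emit nothing. The only differences are cosmetic: you factor $Q(1+zu)\sigma_u M_J=(1+qzu)\sigma_u\,QM_J=\ddb\LL$ directly, where the paper expands placements (b) and (c, color $2$) separately and adds them to get $\ddb^{2}\dd$; and your appeal to (V1) in the glued clause is superfluous in Case~B, where there is no $W^*$.
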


\begin{proof}
(a) The pred $1$ contributes $u$ (attackable in place: earlier
$L$--walls and $\LL$'s act on this $u$), together with its attacks
on later high walls ($q^{h}$), and on later $2$'s including the
$2^*$ of $P$'s dress ($\sigma_v$, the substitution supplying the $q$
on the dress's $zv$).  $P$ contributes $\hat p_2$, the scalar
$q^{\Lambda}$, and $\sigma_u$ on everything later, including the
$zu$ of its own dress; the dress contributes $M_J$.  Composing,
\[
q^{h+\Lambda}\,\Mu\,\hat p_2\,\sigma_u\sigma_v M_J
=q^{h+\Lambda}\,\Mu\,\hat p_2\,Q M_J
=q^{h+\Lambda}\,\Mu\,\hat p_2\,\LL.
\]
(The color of $P$ is forced to $2$ by (V2): its pred has label $r$.)

(b)$+$(c) Let first wall $t$ be undecorated, with operator
$Q\circ(\text{dress slot})$.  Placement (c) with $P$ of color $1$:
the slot holds $P$, i.e.\ $q^{\Lambda}\hat p_1$, then $P$'s dress
$M_J$; composing with the wall's $Q$:
\[
Q\bigl(q^{\Lambda}\hat p_1\,M_JX\bigr)
=q^{\Lambda}\hat p_1\,J^{(q)}QX
=q^{\Lambda}\hat p_1\,\LL X.
\]
Placement (c) with $P$ of color $2$ adds $\sigma_u$:
\[
Q\bigl(q^{\Lambda}\hat p_2\,\sigma_u(M_JX)\bigr)
=q^{\Lambda}\hat p_2\,(1+q^{2}zu)(1+qzv)\,\sigma_u^{2}\sigma_vX.
\]
Placement (b): the cluster $\{1^*\}$ of wall $t$ contributes $zu$,
receiving $q$ from the wall's $Q$, then $P$ of color $2$ as before:
\[
Q\bigl(zu\;q^{\Lambda}\hat p_2\,\sigma_u(M_JX)\bigr)
=q^{\Lambda+1}zu\,\hat p_2\,(1+q^{2}zu)(1+qzv)\,\sigma_u^{2}\sigma_vX.
\]
Adding (b) to (c, color $2$):
\[
q^{\Lambda}\hat p_2\,(1+qzu)(1+q^{2}zu)(1+qzv)\,
\sigma_u^{2}\sigma_v X
\;=\;q^{\Lambda}\hat p_2\,\ddb^{\,2}\dd\,X ,
\]
since $\ddb^{2}=(1+qzu)(1+q^{2}zu)\sigma_u^{2}$; and (c, color $1$)
is $q^{\Lambda}\hat p_1\,\ddb\dd\,X$.  The displayed identity in the
statement follows from $\LL=\dd\ddb=\ddb\dd$ and
$\ddb\LL=\ddb^{2}\dd$.  For a decorated $L$--wall $t$, the wall's
operator is the bare slot $M_J$, now filled by the $P$--material;
the composite at wall $t$ is
$q^{\Lambda}M_J(\hat p_1+\hat p_2\ddb)$ before any $Q$.  If wall
$t$'s leftward glue chain fails to anchor, the refined tuple is
empty (Lemma~\ref{lem:chains}) and the placement contributes $0$;
otherwise applying
the eventual anchor's $Q$ (Lemma~\ref{lem:chains}) converts it to
\[
q^{\Lambda}\,J^{(q)}\,
\bigl(\hat p_1+\hat p_2(1+q^{2}zu)\sigma_u\bigr)\,Q
=q^{\Lambda}\bigl(\hat p_1\,\LL+\hat p_2\,\ddb\,\LL\bigr),
\]
the same composite; intermediate glued walls are transparent.  (We
used $Q\,\sigma_u(1+zu)(1+zv)
=(1+q^{2}zu)(1+qzv)\,\sigma_u^{2}\sigma_v$ and
$J^{(q)}(1+q^{2}zu)\,\sigma_u Q=\ddb^{2}\dd$.)

(glued) Suppose first that a glued wall immediately follows $P$.
By Lemma~\ref{lem:dict}\textup{(V4)}, a decorated $L$--wall cannot
use a flat active predecessor: every diagonal--$0$ active has label
$r$ or $r+1$, larger than the wall label.  Hence a decorated
$L$--wall whose demand is satisfied by an active predecessor must be
preceded by the diagonal--one row $P$, where the diagonal drop makes the
demand valid.  Thus $P$ is last in
gap $t$; its own dress is empty, and wall $t+1$'s $M_J$ occupies the
unique slot after $P$.  Since that slot holds exactly one factor $J$
in either reading (a second cluster cannot follow a $2^*$, nor a
$1^*$ follow the first cluster's $2^*$, by Lemma~\ref{lem:cluster}),
the operator composites of \textup{(a)--(c)} are literally unchanged,
the pair gate of the glue being replaced by the always--valid drop
pred $P$.  If walls $t+1,\dots,t+j$ form a maximal glue chain
anchored at $P$, the demand of wall $t+i+1$ forces the cluster of
wall $t+i$ empty by the same predecessor argument and
Lemma~\ref{lem:cluster}: the glued walls are consecutive rows, so a
head cluster can follow only the last of them; every suppressed wall
is a screened decorated $L$--wall, emitting nothing; and the unique
surviving factor $J$ again occupies the
single slot after the last row of the block.  The composite operators
of \textup{(a)--(c)} are therefore unchanged for chains of any
length.
\end{proof}

The (a)--pieces are handled by Proposition~\ref{prop:flatA} verbatim.

\begin{lemma}\label{lem:apieces}
Each \textup{(a)}--placement contributes to the refined class
generating function a term of the form
$q^{c}\,u\,\hat p_2\;\LL^{k'}T^{m'}\onev$, for some $c\in\mathbb Z$
and $k',m'\in\mathbb Z_{\ge0}$ determined by the class and the
placement, extracted in the forced bidegree, with $c$ color--blind; after $\hat p_2\mapsto v$ this is
$q^{c}\,uv\cdot\LL^{k'}T^{m'}\onev\in uv\cdot\sym\subset\sym$.
\end{lemma}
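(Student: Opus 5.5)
The plan is to reproduce the assembly of Proposition~\ref{prop:flatA} with the $P$--block of Lemma~\ref{lem:Pblocks}(a) treated as one more letter in the word. Fix a Case~B refined class $(m,k,\Xi,g)$ and an (a)--placement: $P$ lies in gap $t$, directly after an undecorated $1$. By Lemmas~\ref{lem:gapsteps}, \ref{lem:walls} and~\ref{lem:chains}, together with the (glued) clause of Lemma~\ref{lem:Pblocks}, the contribution of this placement is $q^{D(\Xi)-k}$ times the bidegree extraction of a word
\[
q^{\mathrm{const}}\,T^{g_0}\mathcal W_1\cdots\mathcal W_t\,T^{b}\,
\bigl(\Mu\,\hat p_2\,\LL\bigr)\,T^{b'}\,\mathcal W_{t+1}\cdots\mathcal W_s\,T^{g_s}\onev .
\]
Here $b+b'+1=g_t$, since the pred $1$ is one of the $g_t$ undecorated actives and $P$ is counted in $g_t$ as well. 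So the compound steps of gap $t$ are split around the block. Each $\mathcal W_j$ is either $\LL$ or $\mathrm{Id}$, and the scalar $q^{h+\Lambda}$ together with all the $q^{h_j}$ is absorbed into a color--blind constant. A glue chain anchored at $P$ leaves the block operator unchanged, by the (glued) clause; gates only kill whole tuples.

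First, I move the factors $\Mu$ and $\hat p_2$ to the far left. The marker $\hat p_2$ is transparent to every operator, so it commutes freely. By Lemma~\ref{lem:comm}(6), $T\Mu=\Mu T$. By Lemma~\ref{lem:comm}(4), $\LL\Mu=q\Mu\LL$, so each $\LL$ standing to the left of the block contributes one factor of $q$. This leaves $q^{c_1}u\hat p_2$ times a word built only from $T$'s and $\LL$'s applied to $\onev$.

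Second, I pull every $\LL$ to the far left using~\eqref{eq:Lpull}. Each pull across $T^a$ contributes $q^{-a}$. The result is $q^{c}u\hat p_2\,\LL^{k'}T^{m'}\onev$, with $k'=k_Q+1$ (the $P$--block supplies the extra $\LL$) and $m'=|g|-2$, or the corresponding count of compound steps. All exponents depend only on $g$, the wall positions and the placement, so $c$ is color--blind. The total $(u,v)$--degree $N$ and $z$--degree $\kappa$ are imposed by extraction, exactly as in the proof of Proposition~\ref{prop:flatA}.

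Third, symmetry. By Theorem~\ref{thm:identityI}, $T^{m'}\onev\in\sym$, and by Lemma~\ref{lem:comm}(7), $\LL^{k'}T^{m'}\onev\in\sym$. Bigraded extraction preserves $\sym$. Since $\hat p_2$ is a transparent marker, substituting $\hat p_2\mapsto v$ after the operators act gives $uv$ times a symmetric polynomial, which is symmetric.

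I expect the main obstacle to be the bookkeeping needed to confirm that the word really has this shape in every subcase. The $P$--block must be a pure insertion between compound steps that does not disturb the neighbouring letters. A glue chain or gate adjacent to $P$ must not introduce a color--dependent factor. The degree extraction also has to be consistent, because $P$ contributes to $g_t$ but carries $\hat p_2$ rather than a $T$--step. I would settle this by checking the row counts directly against Definition~\ref{def:refined} and Lemma~\ref{lem:gapsteps}.
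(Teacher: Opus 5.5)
Your proposal is correct and follows the paper's own argument: insert the (a)--block into the flat word, move $\Mu$ and the transparent $\hat p_2$ to the far left using $T\Mu=\Mu T$ and $\LL\Mu=q\Mu\LL$, pull the $\LL$'s left by~\eqref{eq:Lpull}, and conclude with Theorem~\ref{thm:identityI}, Lemma~\ref{lem:comm}(7), and the fact that bigraded extraction preserves $\sym$. One small slip: the pred $1$ and $P$ are both undecorated rows counted in $g_t$, so the split is $b+b'=g_t-2$ (which agrees with your later $m'=|g|-2$), not $b+b'+1=g_t$; this does not affect the argument, since $m'$ is only claimed to exist.
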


\begin{proof}
After deleting the $P$--block the remaining wall/gap word consists
of diagonal--$0$ letters only and is a flat word of the
form~\eqref{eq:flatword}, any $P$--anchored glue chain being
absorbed into the block (Lemma~\ref{lem:Pblocks}); insert the block
of Lemma~\ref{lem:Pblocks}(a) at its position in this word.  By Lemma~\ref{lem:comm}(6) the operator
$\Mu$ commutes with $T$, and $\LL\Mu=q\Mu\LL$; since $\hat p_2$ is
transparent, both prefactors migrate to the far left at the cost of
color--blind scalars, and pulling the $\LL$'s left
by~\eqref{eq:Lpull} gives the normal form.  The factor $uv$ is
$\swap$--invariant, $\LL^{k'}T^{m'}\onev\in\sym$ by
Theorem~\ref{thm:identityI} and Lemma~\ref{lem:comm}(7), and
extraction preserves $\sym$.
\end{proof}

For the (b)$+$(c)--pieces, the new symmetric family is the
following.

\begin{lemma}[Lemma $\BB$]\label{lem:B}
For all integers $g,M\ge 0$,
\[
\BB(g,M)\;:=\;u\,T^{\,g+M}\onev\;+\;v\,T^{\,g}\,\ddb\,T^{\,M}\onev
\;\in\;\sym.
\]
\end{lemma}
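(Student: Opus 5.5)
The plan is to prove Lemma~$\BB$ by induction on $g$, with $M$ fixed, using the symmetry of $T^{N}\onev$ (Theorem~\ref{thm:identityI}) and the companion identity $\bar T\,T^{N}\onev=T^{N+1}\onev$ (Corollary~\ref{cor:TbarT}).

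\emph{Base case.} The case $g=0$ is immediate. Since $\bar T=v\,\ddb+u$, we have
\[
\BB(0,M)=u\,T^{M}\onev+v\,\ddb\,T^{M}\onev=\bar T\,T^{M}\onev=T^{M+1}\onev,
\]
by Corollary~\ref{cor:TbarT}, and this lies in $\sym$ by Theorem~\ref{thm:identityI}.

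\emph{Recursion in $g$.} For the inductive step I would look for an exact first--order recursion expressing $\BB(g+1,M)$ through $\BB(g,M)$ by an operation that visibly preserves $\sym$. By Lemma~\ref{lem:comm}(6), $T$ commutes with $\Mu$, and $T(\Mv f)=\Mv\,T^{\langle q\rangle}f$. Applying $T$ to $\BB(g,M)$ therefore gives
\[
T\,\BB(g,M)=u\,T^{g+M+1}\onev+v\,T^{\langle q\rangle}T^{g}\ddb\,T^{M}\onev .
\]
Since $T^{\langle q\rangle}-T=(q-1)\Mu\dd$, this yields
\[
\BB(g+1,M)=T\,\BB(g,M)-(q-1)\,uv\,\dd\,T^{g}\ddb\,T^{M}\onev .
\]

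The difficulty is that $T$ alone does not preserve $\sym$, because $\swap T=\bar T\swap$. So I would symmetrize the recursion. Apply $\swap$, use $\swap\,\BB(g,M)=\BB(g,M)$ (the inductive hypothesis), and compare with the same computation carried out with $\bar T$ in place of $T$; by Corollary~\ref{cor:TbarT}, $\bar T$ agrees with $T$ on the symmetric pieces $T^{N}\onev$. Concretely, the target is an identity of the form
\[
\BB(g+1,M)=\tfrac12\bigl(T+\bar T\bigr)\BB(g,M)+(\text{explicitly symmetric remainder}),
\]
or the analogous identity with a $q$--weighted combination of $T$ and $\bar T$ replacing the average. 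Here $T+\bar T=u(\dd+1)+v(\ddb+1)$ commutes with $\swap$. The remainder would be shown symmetric by rewriting it, via $T\ddb=\ddb\,T^{\langle1/q\rangle}$ and $\ddb T=T^{\langle q\rangle}\ddb$ (Lemma~\ref{lem:comm}(5)), as $uv$ times a combination of terms $\LL\,T^{a}\onev$, which lie in $\sym$ by Theorem~\ref{thm:identityI} and Lemma~\ref{lem:comm}(7).

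\emph{Main obstacle and fallback.} The main obstacle is finding the exact form of this recursion, since the correction term $uv\,\dd T^{g}\ddb T^{M}\onev$ is not obviously symmetric. If no clean operator recursion appears, I would fall back to direct coefficient extraction. The plan there is to expand $T^{g}$, $T^{M}$ and $T^{g+M}$ by \eqref{eq:qbinT}, and to move $\ddb$ and $\dd^{a}$ through the monomials using Lemma~\ref{lem:comm}(2). This writes the coefficient of $z^{\kappa}u^{\alpha}v^{\beta}$ in $\BB(g,M)$ as an explicit sum of products of Gaussian binomials. Symmetry would then reduce to a $q$--Vandermonde/subset--of--subset identity, in the same way that \eqref{eq:subsub} was handled for Identity~(I). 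I would check this against small $(g,M)$ before committing to either route.
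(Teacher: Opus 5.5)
Your base case is correct, and so is your relation $\BB(g+1,M)=T\,\BB(g,M)-(q-1)\,uv\,\dd\,T^{g}\ddb\,T^{M}\onev$. It even settles $g=1$, since $T\BB(0,M)=T^{M+2}\onev$ and the correction is then $(q-1)uv\,\LL\,T^{M}\onev$. Beyond that, however, the inductive step is not proved. You only guess the shape of a symmetrized recursion (the average of $T$ and $\bar T$, or an unspecified $q$--weighted combination) and assert, without computing it, what the remainder looks like. The coefficient--extraction fallback is likewise only announced. The difficulty is structural: with $M$ fixed, the hypothesis $\swap\BB(g,M)=\BB(g,M)$ gives no control over the correction $uv\,\dd T^{g}\ddb T^{M}\onev$, and that term carries the whole content of the step. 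Already for $g=1$, $M=0$, neither piece of your decomposition is symmetric. By Lemma~\ref{lem:comm}(5), $\dd\,T\,\ddb\onev=\LL\bigl(q^{-1}u+v+uvz\bigr)$. This is not $\swap$--invariant, because $\LL$ commutes with $\swap$ and is injective, and $q^{-1}u+v+uvz$ is not symmetric. So a genuinely new identity for $\dd T^{g}\ddb T^{M}\onev$ is needed, and the proposal does not supply one.

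The missing idea is to move the extra $T$ \emph{through} $\ddb$ instead of applying it from outside. Then the step trades $(g+1,M)$ for $(g,M+1)$, and the induction on $g$ is run uniformly in $M$. By Lemma~\ref{lem:comm}(5), $T^{g+1}\ddb\,T^{M}\onev=T^{g}\ddb\,T^{\langle1/q\rangle}T^{M}\onev$. Writing $u\dd=T-v$ gives $T^{\langle1/q\rangle}T^{M}\onev=q^{-1}T^{M+1}\onev+(1-q^{-1})\,v\,T^{M}\onev$. The new $v$--term collapses by parts (2), (5), (6) and Corollary~\ref{cor:TbarT}: $v\,T^{g}\ddb\bigl(v\,T^{M}\onev\bigr)=v^{2}\ddb\,T^{g+M}\onev=v(\bar T-u)T^{g+M}\onev=v\,T^{g+M+1}\onev-uv\,T^{g+M}\onev$. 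Adding $u\,T^{g+M+1}\onev$ gives
\[
\BB(g+1,M)=q^{-1}\BB(g,M+1)+(1-q^{-1})\bigl[(u+v)\,T^{g+M+1}\onev-uv\,T^{g+M}\onev\bigr],
\]
whose correction is visibly symmetric by Theorem~\ref{thm:identityI}. Iterating at fixed $g+M$ even yields, for $g\ge1$, the closed form
\[
\BB(g,M)=q^{-g}T^{g+M+1}\onev+(1-q^{-g})\bigl[(u+v)T^{g+M}\onev-uv\,T^{g+M-1}\onev\bigr].
\]
This shows that $g+M$, not $M$, is the natural quantity to hold fixed.
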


\begin{proof}
\emph{Base $g=0$.}
$\BB(0,M)=u\,T^M\onev+v\,\ddb\,T^M\onev=\bar T\,T^{M}\onev
=T^{M+1}\onev\in\sym$, by Corollary~\ref{cor:TbarT} and
Theorem~\ref{thm:identityI}.

\emph{Recursion.}  We claim, as an identity of polynomials,
\begin{equation}\label{eq:Brec}
\BB(g+1,M)\;=\;q^{-1}\,\BB(g,M+1)
\;+\;(1-q^{-1})\Bigl[(u+v)\,T^{\,g+M+1}\onev-uv\,T^{\,g+M}\onev
\Bigr].
\end{equation}
Granting~\eqref{eq:Brec}, induction on $g$ finishes the proof: the
bracket lies in $\sym$ by Theorem~\ref{thm:identityI}, since $u+v$
and $uv$ are symmetric multipliers, and $\BB(g,M+1)\in\sym$ by the
inductive hypothesis, which is quantified over all $M$.

To prove~\eqref{eq:Brec}, start from
$T\ddb=\ddb\,T^{\langle 1/q\rangle}$, Lemma~\ref{lem:comm}(5):
\[
T^{g+1}\ddb\,T^M\onev
= T^{g}\,\ddb\;T^{\langle 1/q\rangle}T^{M}\onev.
\]
Now $T^{\langle 1/q\rangle}=q^{-1}u\dd+v$ and
$u\dd\,T^M\onev=(T-v)\,T^M\onev=T^{M+1}\onev-v\,T^M\onev$, so
\[
T^{\langle 1/q\rangle}T^M\onev
= q^{-1}\,T^{M+1}\onev+(1-q^{-1})\,v\,T^{M}\onev,
\]
whence
\[
v\,T^{g+1}\ddb\,T^M\onev
= q^{-1}\,v\,T^{g}\ddb\,T^{M+1}\onev
+(1-q^{-1})\,v\,T^{g}\,\ddb\bigl(v\,T^{M}\onev\bigr).
\]
For the last term, Lemma~\ref{lem:comm} gives
$\ddb(vG)=v\,\ddb G$ by part (2),
$T^{g}(vK)=v\,(T^{\langle q\rangle})^{g}K$ by part (6), and
$(T^{\langle q\rangle})^{g}\ddb=\ddb\,T^{g}$ by iterating part
(5).  Therefore
\[
v\,T^{g}\,\ddb\bigl(v\,T^M\onev\bigr)
= v^{2}\,\ddb\,T^{\,g+M}\onev
= v\,(\bar T-u)\,T^{\,g+M}\onev
= v\,T^{\,g+M+1}\onev-uv\,T^{\,g+M}\onev,
\]
using $v\ddb=\bar T-u$ and Corollary~\ref{cor:TbarT}.  Adding
$u\,T^{g+1+M}\onev$ and comparing with
$q^{-1}\BB(g,M+1)=q^{-1}u\,T^{g+M+1}\onev
+q^{-1}v\,T^g\ddb\,T^{M+1}\onev$
yields exactly~\eqref{eq:Brec}.
\end{proof}

\begin{proposition}[assembly: Case B]\label{prop:caseB}
Let $m=0^{n-1}1$ and let the class $(m,k,\Xi)$ be of Case B.  For
every refined tuple $g$, the generating function
$\Phi^{(r)}_{m,k,\Xi,g}$ is the extraction, in total $(u,v)$--degree
$N$ and $z$--degree $\kappa=N-|g|$ after the substitution
$\hat p_1\mapsto u$, $\hat p_2\mapsto v$ (so that $P$ counts toward
the $(u,v)$--degree and not the $z$--degree), of a finite sum of
terms of the two forms
\[
q^{c}\;uv\cdot\LL^{k'}T^{m'}\onev
\qquad\text{and}\qquad
q^{c}\;\LL^{k'}\,\BB(G,M),
\]
for some $c\in\mathbb Z$ and $k',m',G,M\in\mathbb Z_{\ge0}$
determined by the class and the placement, with color--blind
exponents; each $q^{c}$ includes the global
factor $q^{\,D(\Xi)-k}$ of Lemma~\ref{lem:dict}, exactly as in
Proposition~\ref{prop:flatA}.  In particular
$\Phi^{(r)}_{m,k,\Xi,g}\in\sym$; more precisely, each
\textup{(a)}--placement, and each \textup{(b)}$+$\textup{(c)} block (the sum over the admissible head/color alternatives of $P$ at a
fixed $L$--type wall) contributes a term in $\sym$.
\end{proposition}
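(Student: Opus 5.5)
We prove Proposition~\ref{prop:caseB} by partitioning the refined class according to the placement of $P$, as in Lemma~\ref{lem:Pblocks}. Either $P$ sits inside some gap $t$ after an undecorated $1$ (placement (a)), or $P$ sits at the head of a gap $t\ge1$ whose left wall is of type $L$ (placements (b) and (c), which we always group together at a fixed wall). This grouping exhausts the class by (V2): the pred of $P$ is an undecorated $1$, a decorated $1^*$ heading an $L$--wall cluster, or an $L$--wall. The $P$--anchored glue chains are absorbed into the block by the glued clause of Lemma~\ref{lem:Pblocks}. Deleting the $P$--block leaves a flat diagonal--$0$ word of shape~\eqref{eq:flatword}. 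Its gates, glue demands and the scalars $q^{h_t},q^{\Lambda_t}$ are color--blind by Lemmas~\ref{lem:walls}, \ref{lem:chains} and Remark~\ref{rem:table}. The whole expression carries the global factor $q^{D(\Xi)-k}$ from~\eqref{eq:dict}. The bidegree extraction is exactly as in Proposition~\ref{prop:flatA}. $P$ has no $z$ and is read off through $\hat p_1,\hat p_2$, and the $|g|$ undecorated rows (including $P$) force $z$--degree $\kappa=N-|g|$.

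The (a)--placements are exactly Lemma~\ref{lem:apieces}. Each gives $q^{c}uv\cdot\LL^{k'}T^{m'}\onev$, which lies in $\sym$. The extraction then preserves $\sym$ because $\swap$ respects the bigrading.

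For a (b)$+$(c) block at wall $t$ we write the operator word as
\[
q^{c_0}\,T^{g_0}\mathcal W_1\cdots T^{g_{t-1}}\,\bigl[q^{\Lambda}(\hat p_1\LL+\hat p_2\ddb\LL)\bigr]\,T^{g_t}\mathcal W_{t+1}\cdots T^{g_s}\onev .
\]
The key step is normal ordering. First, every $\LL$ to the right of the block is pulled leftward past the $T$'s by~\eqref{eq:Lpull}. It commutes with $\ddb$ and with the transparent markers, and costs only color--blind powers of $q$. Together with the block's own $\LL$, this yields $\LL^{k''}$ followed by $(\hat p_1+\hat p_2\ddb)\,T^{M}\onev$, with $M=g_t+\cdots+g_s$ (counting only the diagonal--$0$ undecorated actives). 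Next, every $\LL$ to the left of the block is pulled to the far left. The remaining $T^{G}$ with $G=g_0+\cdots+g_{t-1}$ passes across $\LL^{k''}$ by~\eqref{eq:Lpull} and commutes with the transparent markers. The result is
\[
q^{c}\,\LL^{k'}\bigl(\hat p_1\,T^{G+M}\onev+\hat p_2\,T^{G}\ddb\,T^{M}\onev\bigr).
\]
After $\hat p_1\mapsto u$, $\hat p_2\mapsto v$, this is $q^{c}\LL^{k'}\BB(G,M)$. Lemma~$\BB$ (Lemma~\ref{lem:B}) and Lemma~\ref{lem:comm}(7) then give membership in $\sym$.

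The main obstacle is bookkeeping. We must check that each gate and each glue condition depends only on $(\Xi,g)$ and the placement. Then the (b) and (c) alternatives are either jointly admissible or jointly not, so the block never splits into its $\hat p_1$ and $\hat p_2$ halves. This is the non--splitting clause of Lemma~\ref{lem:walls}, applied to $P$'s host wall together with the glued clause. We must also verify that the substitution $\hat p_i\mapsto u,v$ legitimately commutes with the normal ordering. It does, because the markers are scalars for all operators and the substitution is performed only at the end. Summing the finitely many placements then yields $\Phi^{(r)}_{m,k,\Xi,g}\in\sym$.
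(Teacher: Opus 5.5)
Your route is the paper's: split by the placement of $P$, handle the (a)--placements by Lemma~\ref{lem:apieces}, normal--order each (b)$+$(c) block, and conclude with Lemma~\ref{lem:B} and Lemma~\ref{lem:comm}(7). However, the final identification is false as written, and the reason you give for it is backwards.

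After normal ordering you hold, in $\widehat R$, the element $q^{c}\LL^{k'}(\hat p_1A+\hat p_2B)$ with $A=T^{G+M}\onev$ and $B=T^{G}\ddb\,T^{M}\onev$. Since the markers are transparent, this equals $q^{c}(\hat p_1\,\LL^{k'}A+\hat p_2\,\LL^{k'}B)$. The substitution then turns it into $q^{c}(u\,\LL^{k'}A+v\,\LL^{k'}B)$. That is not $q^{c}\LL^{k'}\BB(G,M)=q^{c}\LL^{k'}(uA+vB)$. The substitution $\hat p_1\mapsto u$, $\hat p_2\mapsto v$ does not commute with $\LL$, because $\LL$ fixes the markers while $\LL\Mu=q\Mu\LL$ and $\LL\Mv=q\Mv\LL$. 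So ``the markers are scalars for all operators and are substituted only at the end'' is exactly why the two expressions differ, not a reason they agree.

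The missing line is Lemma~\ref{lem:comm}(4), and it is how the paper finishes. Because $\LL$ rescales $u$ and $v$ by the \emph{same} factor $q$, one has $\LL^{k'}(uA+vB)=q^{k'}(u\,\LL^{k'}A+v\,\LL^{k'}B)$. Hence the block equals $q^{c-k'}\LL^{k'}\BB(G,M)$, and symmetry follows from Lemma~\ref{lem:B} and Lemma~\ref{lem:comm}(7).

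The equal scaling is essential, not bookkeeping. An operator that weighted the two colors differently would give the $u$-- and $v$--halves of $\BB(G,M)$ different powers of $q$, and the symmetry of $\BB$ would not transfer.

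A minor slip as well: since $P$ is counted in $g_t$ (Definition~\ref{def:refined}), the exponent following the block in gap $t$ should be $g_t-1$, and $M=(g_t-1)+g_{t+1}+\cdots+g_s$. With these corrections your argument coincides with the paper's proof.
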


\begin{proof}
Sum over the position of $P$ (the gap $t$ and the placement type), which is not part of the refined data.  By Lemma~\ref{lem:dict}\textup{(V2)}, the predecessor of $P$ is exactly one of: an undecorated active $1$, the decorated head $1^*$ of an $L$--type wall's head cluster, or an $L$--type wall directly; these mutually exclusive alternatives are precisely placements \textup{(a)}, \textup{(b)}, and \textup{(c)}, and they exhaust the nonempty placements of $P$.  The global factor
$q^{\,D(\Xi)-k}$ of Lemma~\ref{lem:dict} multiplies every
placement and is absorbed into the exponents $c$.  The
(a)--placements give
the first form by Lemma~\ref{lem:apieces}.  For a
(b)$+$(c)--block at an $L$--type wall $t$ (Lemma~\ref{lem:Pblocks}),
write the suffix to its right in the normal form
$X=q^{-c_x}\,\LL^{K}T^{M}\onev$, obtained as in the proof of
Proposition~\ref{prop:flatA}: after absorbing any rightward glue chain
whose first demand is satisfied by $P$ into the $P$--block as in the
glued clause of Lemma~\ref{lem:Pblocks}, the suffix is a flat
$P$--free word with all gates and glue demands already decided.  Then,
using $\ddb\LL=\LL\ddb$,
\[
q^{\Lambda}\bigl(\hat p_1\LL+\hat p_2\ddb\LL\bigr)X
= q^{\Lambda-c_x}\bigl(\hat p_1\,\LL^{K+1}T^{M}\onev
+\hat p_2\,\LL^{K+1}\ddb\,T^{M}\onev\bigr).
\]
When wall $t$ is decorated, first contract its maximal leftward
glue chain (Lemma~\ref{lem:Pblocks}): if the chain fails to anchor,
the placement contributes $0$; otherwise the anchor's $Q$, the
suppressed intermediate walls, and the material at wall $t$ compose
into the block operator
$q^{\Lambda}\bigl(\hat p_1\,\LL+\hat p_2\,\ddb\,\LL\bigr)$ of
Lemma~\ref{lem:Pblocks}, and ``the block'' means this contraction.
The prefix to the left of the block is a flat word
$T^{a_0}\mathcal{W}_1T^{a_1}\cdots$ as in~\eqref{eq:flatword}; since
$\hat p_1,\hat p_2$ are transparent, each prefix factor acts on the
two summands separately.  Every $T^a$ sends
$\LL^{K+1}T^{j}\onev\mapsto q^{-a(K+1)}\LL^{K+1}T^{a+j}\onev$ and
$\LL^{K+1}T^{j}\ddb\,T^M\onev\mapsto
q^{-a(K+1)}\LL^{K+1}T^{a+j}\ddb\,T^M\onev$ by~\eqref{eq:Lpull},
the $\ddb$ staying at distance $M$ from the right; every prefix
$\LL$ raises $K$ by one.  Hence the block's total contribution,
after the final substitution, is
\[
q^{c}\bigl(u\,\LL^{k'}T^{\,G+M}\onev
+v\,\LL^{k'}T^{\,G}\ddb\,T^{M}\onev\bigr),
\]
where $G$ collects the gap steps between the block and the start of
the word and $c$ is color--blind.  Finally, by
Lemma~\ref{lem:comm}(4),
$\LL^{k'}(uA+vB)=q^{k'}\bigl(u\,\LL^{k'}A+v\,\LL^{k'}B\bigr)$ for
arbitrary $A,B$, so the display equals
$q^{c-k'}\,\LL^{k'}\BB(G,M)$, of the second form; it lies in $\sym$
by Lemma~\ref{lem:B} and Lemma~\ref{lem:comm}(7).  The glued
configurations of Lemma~\ref{lem:Pblocks} produce the same
composites.  Gates may annihilate a placement, and a glue demand may
suppress a $J$--factor wholesale (Lemmas~\ref{lem:walls}
and~\ref{lem:chains}).  Outside the explicit $P$--block decomposition
of Lemma~\ref{lem:Pblocks}, no admissibility condition splits a
surviving factor $J$ term by term (Lemma~\ref{lem:walls}); inside the
$P$--block, the only split is the grouped \textup{(b)}$+$\textup{(c)}
contribution just proved symmetric.  Hence the refined class generating
function is the sum of the listed symmetric terms.
\end{proof}

\section{Proofs of the main theorems}\label{sec:proofs}

\begin{theorem}[refined scaffold exchange in area at most one]
\label{thm:main}
Let $n\ge1$, let $m$ be a size--$n$ diagonal multiset with
$\sortw(m)\in\{0^{n},0^{n-1}1\}$ (the second case occurring only for
$n\ge2$), let $k\ge 0$, $r\ge 1$, let $\Xi$ be any scaffold of
length $s$, and let $g=(g_0,\dots,g_s)\in\mathbb{Z}^{s+1}$ be
interpreted as per--gap undecorated--active counts.  Then
\[
\Phi^{(r)}_{m,k,\Xi,g}(u,v;q)\;=\;\Phi^{(r)}_{m,k,\Xi,g}(v,u;q).
\]
\end{theorem}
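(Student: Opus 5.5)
The theorem is an assembly of the three regime-wise propositions already proved. I would first dispose of the degenerate tuples, then split by regime.

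First, the degenerate cases. If $g$ has the wrong length, a negative entry, or $\sum_t g_t\ne N-\kappa$, then $\Phi^{(r)}_{m,k,\Xi,g}=0$ by Definition~\ref{def:refined}, and the identity is trivial. The same holds whenever the datum $(m,k,\Xi)$ is incompatible in the sense listed at the start of Section~\ref{sec:dictionary}. That covers: $m$ not realizable; wall diagonals not contained in $m$; too many flags; $\kappa>N$; $s>n$; a decorated $W^*$, which is excluded by (V5); and, in Case~B, $|g|=0$. In every one of these the class, hence every refined class, is empty.

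For compatible data, the sorted multiset $\sortw(m)$ and the location of the diagonal-one row place the class in exactly one regime. The class is flat if $m=0^n$. Otherwise $m=0^{n-1}1$, and the class is in Case~A if the diagonal-one row is a wall and in Case~B if it is active. This trichotomy is exhaustive because $m$ contains exactly one entry equal to $1$.

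In the flat case and in Case~A, Proposition~\ref{prop:flatA} gives two possibilities. Either $\Phi^{(r)}_{m,k,\Xi,g}=0$ (a gate fails or a chain does not anchor), or $\Phi^{(r)}_{m,k,\Xi,g}$ equals a color-blind scalar $q^{D(\Xi)-k+c}$ times the bigraded component of $\LL^{k_Q}T^{|g|}\onev$. By Theorem~\ref{thm:identityI}, $T^{|g|}\onev\in\sym$. By Lemma~\ref{lem:comm}(7), $\LL^{k_Q}$ preserves $\sym$. Extracting a bigraded component also preserves $\sym$, since $\swap$ respects both gradings. In Case~B, Proposition~\ref{prop:caseB} expresses $\Phi^{(r)}_{m,k,\Xi,g}$ as a finite sum of two kinds of terms, each extracted in the fixed bidegree after the marker substitution $\hat p_i\mapsto u,v$:
\begin{itemize}
\item terms $q^c\,uv\,\LL^{k'}T^{m'}\onev$, which lie in $\sym$ because $uv$ is symmetric (again by Theorem~\ref{thm:identityI} and Lemma~\ref{lem:comm}(7));
\item terms $q^c\,\LL^{k'}\BB(G,M)$, which lie in $\sym$ by Lemma~\ref{lem:B} and Lemma~\ref{lem:comm}(7).
\end{itemize}
All scalars $q^c$ are color-blind, so they do not interfere with $\swap$-invariance. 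In every regime, then, $\Phi^{(r)}_{m,k,\Xi,g}\in\sym$, which is precisely $\Phi(u,v;q)=\Phi(v,u;q)$ after renaming $z$-free coefficients. The $z$-marking only tracks decorated actives, and extracting the $z^\kappa$ component and then setting $z=1$ does not affect symmetry.

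The only step I expect to need care is confirming that the Case~B extraction commutes with the substitution $\hat p_1\mapsto u$, $\hat p_2\mapsto v$ without breaking symmetry. The grouping of placements (b)+(c) into a single block is essential here, since individually they are not symmetric. Proposition~\ref{prop:caseB} already performs this grouping, so I would cite it rather than reprove it. I would also check that the total $(u,v)$-degree counts $P$ correctly, so that the extraction happens in a $\swap$-stable bidegree. Both gradings are preserved by $\swap$ after the substitution, so this step is routine.
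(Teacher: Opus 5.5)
Your proposal is correct and follows the paper's proof essentially verbatim. Like the paper, you discard incompatible data and inadmissible tuples as empty, observe that flat, Case~A and Case~B exhaust the remaining classes, and invoke Proposition~\ref{prop:flatA} and Proposition~\ref{prop:caseB} (with Theorem~\ref{thm:identityI}, Lemma~\ref{lem:B} and Lemma~\ref{lem:comm}(7) supplying membership in $\sym$). One small imprecision: in Case~B the condition $|g|=0$ empties only that refined tuple, not the whole class, but this does not affect the argument.
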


\begin{proof}
Classes with incompatible data (Section~\ref{sec:dictionary}) and
refined tuples outside the convention of
Definition~\ref{def:refined} are empty, hence symmetric.
Propositions~\ref{prop:flatA} and~\ref{prop:caseB} cover the three
regimes (flat, Case A, Case B), which exhaust the nonempty
classes with $\sortw(m)\in\{0^n,0^{n-1}1\}$.
\end{proof}

\begin{remark}[per--diagonal refinement]\label{rem:perdiag}
The proof gives slightly more than the statement.  In Case~B one may
additionally fix the gap containing the unique diagonal--one active row
$P$, and symmetry persists: by Proposition~\ref{prop:caseB} every
\textup{(a)}--placement and every \textup{(b)}$+$\textup{(c)} block
contributes a symmetric term, and the placements of $P$ inside a
fixed gap partition into blocks of exactly these two kinds, so the
sum over the placements inside any one gap is symmetric.  In the notation of
Conjecture~\ref{conj:refined} below, the \emph{per--diagonal} refined
exchange holds for every $m$ of area at most one.
\end{remark}

\begin{theorem}\label{thm:conj}
The scaffold exchange conjecture
\textup{(}Conjecture~\ref{conj:exchange}\textup{)} holds for every diagonal multiset of area at most
one: for every $n\ge1$, $\sortw(m)\in\{0^{n},0^{n-1}1\}$, all
$k\ge0$ and $r\ge1$,
and every scaffold $\Xi$, the
class generating function $\Phi^{(r)}_{m,k,\Xi}(u,v;q)$ is symmetric
in $u,v$.
\end{theorem}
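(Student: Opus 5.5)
The plan is to deduce Theorem~\ref{thm:conj} directly from Theorem~\ref{thm:main} by summing over the refinement. By Definition~\ref{def:refined}, the class generating function decomposes as
\[
\Phi^{(r)}_{m,k,\Xi}(u,v;q)\;=\;\sum_{g}\Phi^{(r)}_{m,k,\Xi,g}(u,v;q),
\]
where $g$ ranges over the vectors $(g_0,\dots,g_s)\in\mathbb{Z}_{\ge0}^{s+1}$ with $\sum_t g_t=N-\kappa$. First I would check that this is a partition of the class. Every object $O$ of the class determines its own vector $(g_t(O))_t$ of undecorated active counts per gap, and this vector satisfies the constraints. Hence each object lies in exactly one refined class, and the sum is finite: there are finitely many such $g$ for fixed $s$ and $N$. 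Vectors outside the constraints contribute $0$ by convention, so nothing is lost by summing over all of $\mathbb{Z}^{s+1}$.

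Next I would apply Theorem~\ref{thm:main} to each summand. Its hypotheses are the same as those of Theorem~\ref{thm:conj}: $n\ge1$, $\sortw(m)\in\{0^n,0^{n-1}1\}$, arbitrary $k,r,\Xi$, and arbitrary $g\in\mathbb{Z}^{s+1}$. So every $\Phi^{(r)}_{m,k,\Xi,g}$ is invariant under $\swap$, and a finite sum of $\swap$-invariant polynomials is $\swap$-invariant. The degenerate cases need a brief remark. If the data are incompatible (Section~\ref{sec:dictionary}), the class is empty and $\Phi^{(r)}_{m,k,\Xi}=0$, which is trivially symmetric. If $\sortw(m)=0^{n-1}1$ with $n=1$, the multiset is unrealizable and the class is again empty.

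There is essentially no obstacle here, since all the work sits in Theorem~\ref{thm:main}. The only point needing care is that the refinement by $g$ is a genuine disjoint decomposition of the class and not a fibering that double-counts. In Case~B the count $g_t$ includes the row $P$, and $P$ is never decorated by Lemma~\ref{lem:dict}(V2), so $g(O)$ is well defined and unique for each object. Finally, I would note that the general reduction, Proposition~\ref{prop:reduction}, then yields Corollary~C.
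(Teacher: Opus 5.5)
Your proposal is correct and is the paper's argument. The paper likewise writes $\Phi^{(r)}_{m,k,\Xi}=\sum_g\Phi^{(r)}_{m,k,\Xi,g}$ using Definition~\ref{def:refined}, and applies Theorem~\ref{thm:main} to each summand.
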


\begin{proof}
By Definition~\ref{def:refined},
$\Phi^{(r)}_{m,k,\Xi}=\sum_{g}\Phi^{(r)}_{m,k,\Xi,g}$, and each
summand is symmetric by Theorem~\ref{thm:main}.  (By
Remark~\ref{rem:D}, fixing the auxiliary datum $D$ in addition would
refine nothing, since $D=D(\Xi)$.)
\end{proof}

\begin{corollary}\label{cor:slices}
For all $n\ge1$ and $k\ge0$, the slices $\Vall_{n,k,0^{n}}$ and
(for $n\ge2$) $\Vall_{n,k,0^{n-1}1}$ are symmetric functions;
equivalently, the coefficients of $t^{0}$ and of $t^{1}$ in
$\Val_{n,k}(x;q,t)$ are symmetric functions.  (Slices with no
objects, e.g.\ $k$ too large, vanish; for $n=1$ the $t^{1}$
coefficient is $0$ by Remark~\ref{rem:lowarea}.)
\end{corollary}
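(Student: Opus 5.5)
The plan is to deduce the corollary directly from Theorem~\ref{thm:conj} via the exchange reduction of Proposition~\ref{prop:reduction}. Fix $n\ge1$ and $k\ge0$, and let $m$ be the unique diagonal multiset of area $0$, namely $0^n$, or, for $n\ge2$, the unique one of area $1$, namely $0^{n-1}1$ (Remark~\ref{rem:lowarea}). Theorem~\ref{thm:conj} gives the exchange identity $\Phi^{(r)}_{m,k,\Xi}(u,v;q)=\Phi^{(r)}_{m,k,\Xi}(v,u;q)$ for every scaffold $\Xi$ and every $r\ge1$. This is exactly the hypothesis of Proposition~\ref{prop:reduction} for this $(n,k,m)$, so $\Vall_{n,k,m}$ is a symmetric function.

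Next I would handle the degenerate situations separately so that none is silently excluded. If $k>n$, or more generally no object exists, then every class is empty, each $\Phi^{(r)}_{m,k,\Xi}$ is $0$, and $\Vall_{n,k,m}=0$, which is trivially symmetric. The reduction argument still applies verbatim in this case, since the scaffold sum is then the empty sum. For $n=1$ there is no multiset of area one, so the $t^1$ coefficient is $0$.

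For the equivalent formulation, I would expand $\Val_{n,k}=\sum_m t^{\area(m)}\Vall_{n,k,m}$. By Remark~\ref{rem:lowarea} the coefficient of $t^0$ is the single slice $\Vall_{n,k,0^n}$. The coefficient of $t^1$ is $\Vall_{n,k,0^{n-1}1}$ for $n\ge2$ and $0$ for $n=1$. Symmetry of the slices and symmetry of these coefficients are therefore literally the same statement.

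There is essentially no obstacle here. The only point deserving a sentence is that Proposition~\ref{prop:reduction} requires the exchange identity for \emph{all} $r\ge1$ and \emph{all} scaffolds, including scaffolds not realized by any object. Theorem~\ref{thm:conj} is quantified this way, and unrealizable scaffolds give $\Phi=0$ by definition. With that checked, the local finiteness of the scaffold regrouping, already verified in the proof of Proposition~\ref{prop:reduction}, completes the argument.
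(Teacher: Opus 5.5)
Your proposal is correct and follows the paper's route exactly. Theorem~\ref{thm:conj} supplies the exchange identity for every scaffold and every $r$, Proposition~\ref{prop:reduction} turns this into symmetry of $\Vall_{n,k,m}$, and Remark~\ref{rem:lowarea} identifies these slices with the $t^{0}$ and $t^{1}$ coefficients, with the $t^{1}$ coefficient vanishing for $n=1$; your remarks on empty classes and unrealizable scaffolds simply make explicit what the paper's one-line proof leaves implicit.
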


\begin{proof}
Theorem~\ref{thm:conj} and Proposition~\ref{prop:reduction}, together
with Remark~\ref{rem:lowarea}: $0^n$ is the unique area--zero
multiset, for $n\ge2$ the multiset $0^{n-1}1$ is the unique
area--one multiset, and for $n=1$ the $t^{1}$ coefficient vanishes.
\end{proof}

\begin{remark}[relation to the $t=0$ specializations]\label{rem:QW}
At $t^{0}$, symmetry also follows from the theorem of Qiu and Wilson
\cite{QiuWilson}, who identified the $t=0$ specialization of the
(extended) valley side with an explicit symmetric function via
ordered multiset partitions, in the line of
\cite{HRS,Rhoades}; see also the $q=0$ specialization in the same
paper.  Corollary~\ref{cor:slices} at area zero is thus a new proof
by a structurally different route (it proceeds class by refined
class, never identifying the sum with a symmetric--function
expression), and the area--one statement is, to our knowledge, the
first symmetry result for any positive--area slice of the valley
side.  We emphasize what is \emph{not} claimed: we do not identify
$\Vall_{n,k,0^{n-1}1}$ with the corresponding slice of
$\Delta'_{e_{n-k-1}}e_n$, which would amount to the $t^{1}$ case of
the valley Delta conjecture itself.
\end{remark}

\section{Complements}\label{sec:complements}

\subsection{No letter--level intertwiner}\label{ssec:noR}

The classical route to exchange identities, as in \cite{HHL}, is a
local involution: an $R$--matrix at the level of single letters.  The
following observation shows that in the present calculus no such
operator exists, already in the flat undecorated case; the burden of
symmetry is genuinely carried by the closed series identities of
Theorem~\ref{thm:identityI} and Lemma~\ref{lem:B}.

\begin{proposition}\label{prop:noR}
Let $A_1:=\Mu\,\dd$ and $A_2:=\Mv$, the letter operators of the
model, so that $T=A_1+A_2$.  If an operator $\Psi$ on
$R=\mathbb{Z}[q,q^{-1}][u,v,z]$, linear over $\mathbb{Z}[q,q^{-1}]$,
satisfies the intertwining relations
\[
\Psi\,A_1=A_2\,\Psi,
\qquad
\Psi\,A_2=A_1\,\Psi,
\]
then $\Psi=0$.  The same holds for the undecorated specialization
$A_1=u\,\sigma_v$, $A_2=\Mv$ acting on
$\mathbb{Z}[q,q^{-1}][u,v]$.
\end{proposition}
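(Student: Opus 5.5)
The plan is to exploit the quantum--torus relation between the two letter operators. Applying $\Psi$ to it and using the intertwining relations produces a second relation with the opposite $q$--power. Comparing the two then forces $\Psi$ to vanish, because the letter operators are injective on a domain. No explicit computation of $\Psi$ on monomials should be needed.

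\emph{Step 1 (the commutation relation).} By Lemma~\ref{lem:comm}(3), $A_1A_2=(\Mu\dd)\Mv=q\,\Mv(\Mu\dd)=q\,A_2A_1$. In the undecorated specialization the same computation gives $u\sigma_v(vf)=quv\,\sigma_vf$, so again $A_1A_2=qA_2A_1$.

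\emph{Step 2 (transport through $\Psi$).} I compose $\Psi$ with both sides of $A_1A_2=qA_2A_1$ and push $\Psi$ to the right using the two intertwining relations. The left side becomes
\[
\Psi A_1A_2=A_2\Psi A_2=A_2A_1\Psi .
\]
The right side becomes
\[
q\,\Psi A_2A_1=q\,A_1\Psi A_1=q\,A_1A_2\Psi=q^{2}A_2A_1\Psi ,
\]
where the last equality is Step~1 applied once more. Hence $(1-q^{2})\,A_2A_1\Psi=0$ as an operator on $R$.

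\emph{Step 3 (cancellation).} For every $f\in R$, Step~2 gives $(1-q^2)\,uv\,\dd(\Psi f)=0$. Here $R=\mathbb{Z}[q,q^{-1}][u,v,z]$ is an integral domain, and $1-q^2$ and $uv$ are nonzero elements of it. Therefore $\dd(\Psi f)=0$.

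The operator $\dd=(1+qzv)\sigma_v$ is injective: $\sigma_v$ is a ring automorphism of $R$ with inverse $v\mapsto q^{-1}v$, and multiplication by the nonzero element $1+qzv$ is injective in a domain. Thus $\Psi f=0$ for all $f$, i.e.\ $\Psi=0$. The undecorated case is identical, with $\dd$ replaced by the automorphism $\sigma_v$ of the domain $\mathbb{Z}[q,q^{-1}][u,v]$.

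\emph{Expected obstacle.} The only point requiring care is the bookkeeping in Step~2: one must check that the intertwining relations really produce $q^2$ rather than $q$ or $q^{0}$, i.e.\ that exchanging the roles of $A_1$ and $A_2$ inverts the commutation factor twice relative to the original ordering. After that, one needs only linearity of $\Psi$ over $\mathbb{Z}[q,q^{-1}]$ (so the scalar $q$ passes through $\Psi$) and the absence of zero divisors, and both are immediate.
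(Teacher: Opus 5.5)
Your proof is correct and follows the paper's argument essentially verbatim. You derive $A_1A_2=qA_2A_1$, transport it through $\Psi$ via the two intertwining relations to get $(1-q^{2})A_2A_1\Psi=0$, and conclude from the injectivity of $A_2A_1=uv\,\dd$; the only cosmetic difference is that you cancel $1-q^{2}$ using that $R$ is an integral domain, where the paper phrases this as torsion-freeness of the free $\mathbb{Z}[q,q^{-1}]$-module $R$.
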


\begin{proof}
By Lemma~\ref{lem:comm}(2),
$A_1A_2=\Mu\dd\Mv=q\,\Mu\Mv\dd=q\,A_2A_1$: the letter operators
satisfy the quantum--plane relation.  Hence
\[
A_2A_1\Psi \;=\; \Psi\,A_1A_2 \;=\; q\,\Psi\,A_2A_1 \;=\; q\,A_1A_2\,\Psi
\;=\; q^{2}\,A_2A_1\Psi ,
\]
so $(1-q^{2})\,A_2A_1\Psi=0$.  The ring $R$ is a free
$\mathbb{Z}[q,q^{-1}]$--module and $1-q^{2}\neq 0$, so
$A_2A_1\Psi=0$; and $A_2A_1=uv\,\dd$ is injective ($\sigma_v$ is
bijective, and $(1+qzv)$, $u$, $v$ are non--zero--divisors), whence
$\Psi=0$.  The specialization $z=0$ is identical with
$A_2A_1=uv\,\sigma_v$.
\end{proof}

In other words, an exchange involution at the letter level would
have to conjugate the quantum--plane parameter $q$ into $q^{-1}$,
and only the zero operator intertwines the two orderings.  It is
instructive to contrast this with what \emph{is} true: by
Lemma~\ref{lem:comm}(1), $\swap\,T=\bar T\,\swap$, and
Theorem~\ref{thm:identityI} is equivalent to the statement
$T^{m}\onev=\bar T^{m}\onev$ for all $m$, an identity that holds
on the cyclic vector $\onev$, emphatically not as an identity of
operators.  Together with the relations of Lemma~\ref{lem:comm},
this identifies, for the diagonal set $\{0,1\}$, the precise way in
which the quantum--torus transfer formalism extends to decorated
rows: the decorated letters appear not as new generators
but through the dressed multipliers $(1+q^{\bullet}zu)$,
$(1+q^{\bullet}zv)$ inside $\dd,\ddb$; the walls and the $P$--block
act by the words $\LL=\dd\ddb$, $\ddb\LL$, $\Mu\LL$ in the algebra
$\langle \Mu,\Mv,\dd,\ddb\rangle$, a two--variable quantum torus
in which the element $\LL$ is central up to the scaling
$\LL\Mu=q\Mu\LL$, $\LL\Mv=q\Mv\LL$; and the exchange itself is
carried by the two scalar identities, with
Proposition~\ref{prop:noR} showing that no operator--local witness
can exist.

\subsection{The refined conjecture in general, and area
two}\label{ssec:refined}

Definition~\ref{def:refined} extends verbatim to an arbitrary
diagonal multiset $m$, where active rows may occupy several
diagonals: for an object $O$ and a gap $t$, let
$g_{t,\delta}(O)$ be the number of undecorated active rows on
diagonal $\delta$ in gap $t$.  Theorem~\ref{thm:main},
Remark~\ref{rem:perdiag}, and the computations reported below
support:

\begin{conjecture}[refined scaffold exchange]\label{conj:refined}
For every $n\ge1$, every diagonal multiset $m$ realizable by an
area word of size $n$, every $k\ge0$ and $r\ge1$, every scaffold
$\Xi$, and every matrix $g=(g_{t,\delta})$ of per--gap,
per--diagonal undecorated--active counts (incompatible data and
inadmissible $g$ giving $0$, as in Section~\ref{sec:dictionary}),
the refined class generating function
$\Phi^{(r)}_{m,k,\Xi,g}(u,v;q)$ is symmetric in $u$ and $v$.
\end{conjecture}

Conjecture~\ref{conj:refined} implies Conjecture~\ref{conj:exchange}
by summing over $g$, and strictly refines it; by
Theorem~\ref{thm:main} and Remark~\ref{rem:perdiag} it holds for
$\area(m)\le 1$.  The first genuinely new case is area two: for $n\ge3$,
$\sortw(m)=0^{n-2}1^{2}$ is the unique multiset (a row on diagonal
$2$ would need a predecessor on diagonal at least $1$, so
$0^{n-1}2$ does not occur; for $n<3$ no area--two multiset is
realizable).  We verified
Conjecture~\ref{conj:refined} at area two by brute--force
enumeration, exhaustively over the following finite ranges:

\begin{center}
\begin{tabular}{lcccc}
\toprule
& $n=4,\ M=4$ & $n=5,\ M=4$ & $n=5,\ M=5$ & $n=6,\ M=4$\\
\midrule
refined classes $(g_{t,\delta})$ & $462$ & $3116$ & $18895$ & $17670$\\
asymmetric & $0$ & $0$ & $0$ & $0$\\
\bottomrule
\end{tabular}
\end{center}

\noindent
Here $n$ is the path size, $M$ bounds the label alphabet, and all
$k$, $r$, $\Xi$ within range are included; in total $40{,}143$
class verifications, each symmetric ($37{,}027$ distinct refined
classes: the $(5,4)$ run is contained in the $(5,5)$ run).  The
coarser, diagonal--blind refinement of Definition~\ref{def:refined}
was checked independently ($38{,}348$ verifications, $35{,}362$
distinct classes), with the same outcome.

We briefly indicate why area two lies beyond the method of this
paper as it stands.  With two active rows on diagonal $1$, the key
shielding phenomenon of Section~\ref{sec:caseB} fails: the earlier
of the two diagonal--one rows can attack the later one (a primary
attack, when its label is smaller), each can emit secondary attacks
to later diagonal--$0$ rows, depending on the labels, and in
particular a diagonal--one row
can now \emph{receive}, so their content variables can no longer be
carried by transparent markers; moreover the two rows may be
adjacent,
producing a second species of compound step whose operator involves
$\dd$ and $\ddb$ simultaneously.  The four--generator algebra
$\langle\Mu,\Mv,\dd,\ddb\rangle$ still hosts all the resulting
words, but the analogues of Lemma~\ref{lem:B} now form a
two--parameter family of coupled recursions which we have not closed
in general.  We expect the calculus to extend, and we propose
Conjecture~\ref{conj:refined}, rather than only
Conjecture~\ref{conj:exchange}, as the structurally correct
target.

\subsection{Verification}\label{ssec:verification}

The proofs above are self--contained and make no use of machine
computation; conversely, the area--two material of
Section~\ref{ssec:refined} is finite computational evidence for
Conjecture~\ref{conj:refined}, not a proved statement.
Independently of the proofs, the complete chain has been verified by
computer, in four layers, all independent of the proofs (the three
combinatorial layers are each compared against a common brute--force
enumeration of the objects $(a,w,S)$; the operator layer is a
stand--alone algebraic check):

\begin{enumerate}
\item \emph{Brute force.}  Direct enumeration of all objects,
attacks, valleys and $\dinv$; for every class $(m,k,\Xi,r)$ in
range, the polynomial $\Phi^{(r)}_{m,k,\Xi}$ was confirmed
symmetric, and the formula that $D$ is determined by the outside
scaffold data (Remark~\ref{rem:D}) was confirmed with key $(r,\Xi)$,
independent of $k$.
\item \emph{Dictionary.}  The receiver weights and validity rules
(V1)--(V5) of Lemma~\ref{lem:dict} were implemented as an
independent enumerator and reproduce every class--$k$ polynomial
exactly.
\item \emph{Operator identities.}  Every non--parameterized relation
of Lemma~\ref{lem:comm}, and the tested finite ranges of its
parameterized relations, were checked identity by identity, in exact
arithmetic and without truncation of the compared outputs, on every
monomial $u^{a}v^{b}z^{c}$ with $a+b\le4$ and $c\le2$.  The
parameter ranges are: $a\le6$ for the assertion
$\dd^a\onev=(-qzv;q)_a$ in Lemma~\ref{lem:comm}\textup{(3)},
$a,k\le2$ for the pull--through relation~\eqref{eq:Lpull} in
Lemma~\ref{lem:comm}\textup{(4)}, $g\le3$ for the iterated relation
$T^g\Mv=\Mv(T^{\langle q\rangle})^g$ in
Lemma~\ref{lem:comm}\textup{(6)}, and $k\le2$ in
Lemma~\ref{lem:comm}\textup{(7)}.  The same operator layer also
verified the expansion~\eqref{eq:qbinT} ($m\le6$), the symmetry of
$T^{m}\onev$ ($m\le 8$), Corollary~\ref{cor:TbarT} ($N\le6$), the
symmetry of $\BB(g,M)$ ($g,M\le 5$), and the recursion~\eqref{eq:Brec}
($g,M\le5$), all over $\mathbb{Z}[q,q^{-1}]$; the truncation bound of
the series engine exceeds the total degree of every compared series,
so no truncation occurs in this layer.
\item \emph{Assembly.}  The operator words of
Propositions~\ref{prop:flatA} and~\ref{prop:caseB} (including
every gate, glue chain, and $P$--placement of
Lemmas~\ref{lem:cluster}--\ref{lem:Pblocks}) were evaluated
symbolically and compared with the brute--force polynomial of every
nonempty \emph{refined} class arising in the enumeration, separately
(classes with no objects are $0$ by the conventions of
Section~\ref{sec:dictionary} and are not separately instantiated):
for the five pairs
\[
(n,M)\in\{(3,4),\,(4,4),\,(5,4),\,(5,5),\,(6,4)\}
\]
of path size and label--alphabet bound, in both multisets $0^n$ and
$0^{n-1}1$, a
total of $58{,}013$ nonempty refined--class verifications, with
exact agreement in every case; the $(5,4)$ classes recur within the
$(5,5)$ run, so the number of distinct classes is $53{,}054$.
\end{enumerate}

The area--two evidence of Section~\ref{ssec:refined} is a fifth,
independent computation, and a sixth checks
Remark~\ref{rem:perdiag} directly.  For a uniform area--one run it
groups the objects with $m=0^{n-1}1$, $n\le5$, labels $\le5$ by
$(r,k,\Xi,g,\eta)$, where $\eta$ is the gap of $P$ in Case~B and a
sentinel in Case~A.  This yields $21{,}957$ refined groups, all
symmetric; $7{,}048$ of them are genuine Case~B per--$P$--gap groups.
In particular, the $381$ objects in range whose glue chains anchor at
$P$ with length at least two (Lemma~\ref{lem:Pblocks}) lie in $167$ of
these Case~B groups, all symmetric.  The complete verification code is
provided as supplementary material: a consolidated runner for the
four layers above, together with stand--alone scripts for the
area--two and per--$P$--gap computations.

\subsection{Open problems}\label{ssec:open}

\begin{enumerate}
\item \emph{Area two.}  Prove Conjecture~\ref{conj:refined} (or
only Conjecture~\ref{conj:exchange}) for $m=0^{n-2}1^{2}$,
$n\ge3$.  As
explained in Section~\ref{ssec:refined}, the missing ingredient is a
closed family of symmetric series generalizing Lemma~\ref{lem:B} to
two interacting $\ddb$--insertions.
\item \emph{Schur positivity.}  Corollary~\ref{cor:slices} upgrades
the $t^{0}$ and $t^{1}$ slices from quasisymmetric to symmetric;
their monomial expansions are manifestly positive.  The valley Delta
conjecture predicts that these slices are Schur positive (indeed equal to the corresponding slices of
$\Delta'_{e_{n-k-1}}e_n$), and it would be very interesting to
establish Schur positivity directly, e.g.\ through an LLT--type
expansion of the refined classes.
\item \emph{A bijective witness.}  Proposition~\ref{prop:noR} rules
out a letter--local involution, not a global one.  Does the refined
exchange of Theorem~\ref{thm:main} admit a weight--preserving
involution on objects?  The recursion~\eqref{eq:Brec}, whose
correction term is a difference of manifestly symmetric pieces
weighted by the factor $1-q^{-1}$, not a nonnegative integral
combination, suggests that any such involution must
act non--locally.
\item \emph{Propagation.}  The implications of Iraci and Vanden
Wyngaerd \cite{IVWpush,IVWsquare} transport the full valley Delta
conjecture to its generalized and square versions.  Do the
slice symmetries of Corollary~\ref{cor:slices}, or the refined
exchange itself, propagate along the same schedules?  Relatedly,
D'Adderio and Iraci \cite{DAdderioIraci} derive consequences from a
symmetry hypothesis on (a refinement of) the \emph{extended} valley
side; extending Theorem~\ref{thm:main} to the partially labelled
setting would connect the two threads.
\end{enumerate}

\end{document}